\documentclass[11pt]{amsart}
\usepackage{amssymb,amscd,  latexsym, graphicx, mathrsfs, enumerate}

\setlength{\textwidth}{460pt} \setlength{\hoffset}{-45pt}

\newcommand{\nc}{\newcommand}
\numberwithin{equation}{section}
\newtheorem{theorem}{Theorem}[section]
\newtheorem{prop}[theorem]{Proposition}
\newtheorem{importnota}[theorem]{Important Notation}
\newtheorem{prblm}[theorem]{Problem}
\newtheorem{notation}[theorem]{Notation}
\newtheorem{caution}[theorem]{Caution}
\newtheorem{remark}[theorem]{Remark}
\newtheorem{lemma}[theorem]{Lemma}
\newtheorem{construction}[theorem]{Construction}
\newtheorem{corollary}[theorem]{Corollary}
\newtheorem{example}[theorem]{Example}
\newtheorem{conclusion}[theorem]{Conclusion}
\newtheorem{triviality}[theorem]{Triviality}
\newtheorem{proto}[theorem]{Prototype Quasifibration}
\newtheorem{cauex}[theorem]{Cautionary Example}
\newtheorem{propositiondef}[theorem]{Proposition-Definition}
\newtheorem{subth}{Nuisance}[theorem]
\newtheorem{ssubth}{ }[subth]
\newtheorem{conjecture}[theorem]{Conjecture}
\newtheorem{sidest}[theorem]{Side Story}
\newtheorem{miniexample}[theorem]{Example}
\theoremstyle{definition}
\newtheorem{defin}[theorem]{Definition}

\nc\tri[1]{\begin{triviality}}
\nc\side[1]{\begin{sidest}}
\nc\conj[1]{\begin{conjecture}}
\nc\prodef[1]{\begin{propositiondef}}
\nc\prt[1]{\begin{proto}}
\nc\lem[1]{\begin{lemma}}
\nc\sblm[1]{\begin{sublemma}}
\nc\pro[1]{\begin{prop}}
\nc\thm[1]{\begin{theorem}}
\nc\cor[1]{\begin{corollary}}
\nc\dfn[1]{\begin{defin}}
\nc\sthm[1]{\begin{subth}}
\nc\exm[1]{\begin{example}}
\nc\miniexm[1]{\begin{miniexample}}
\nc\plm[1]{\begin{prblm}}
\nc\rmk[1]{\begin{remark}}
\nc\subrmk[1]{\begin{subremark}}
\nc\ntn[1]{\begin{notation}}
\nc\cau[1]{\begin{caution}}
\nc\imn[1]{\begin{importnota}}
\nc\cax[1]{\begin{cauex}}
\nc\con[1]{\begin{construction}}
\nc\ssthm[1]{\begin{ssubth}}
\nc\cnc[1]{\begin{conclusion}}
\nc\elem{\end{lemma}}
\nc\esblm{\end{sublemma}}
\nc\eside{\end{sidest}}
\nc\econj{\end{conjecture}}
\nc\eprodef{\end{propositiondef}}
\nc\eprt{\end{proto}}
\nc\ethm{\end{theorem}}
\nc\ecor{\end{corollary}}
\nc\edfn{\end{defin}}
\nc\esthm{\end{subth}}
\nc\epro{\end{prop}}
\nc\etri{\end{triviality}}
\nc\eexm{\end{example}}
\nc\eminiexm{\end{miniexample}}
\nc\ermk{\end{remark}}
\nc\subermk{\end{subremark}}
\nc\eplm{\end{prblm}}
\nc\ecau{\end{caution}}
\nc\ecax{\end{cauex}}
\nc\eimn{\end{importnota}}
\nc\entn{\end{notation}}
\nc\econ{\end{construction}}
\nc\ecnc{\end{conclusion}}
\nc\essthm{\end{ssubth}}
\newcommand{\bQ}{\overline{\mathbb{Q}}}

\newcommand{\C}{\mathbb{C}}

\newcommand{\Q}{\mathbb{Q}}

\newcommand{\Z}{\mathbb{Z}}

\newcommand{\F}{\mathbb{F}}

\newcommand{\G}{\Gamma}

\newcommand{\ds}{\displaystyle}

\newcommand{\bF}{\overline{\F}}

\newcommand{\br}{\overline{\rho}}
\newcommand{\lra}{\longrightarrow}

\newcommand{\bs}{\backslash}
\newcommand{\ve}{\varepsilon}

\title[Congruences of Siegel Eisenstein series of degree two]
{Congruences of Siegel Eisenstein series of degree two}
\author{Takuya Yamauchi}
\keywords{Congruences, Siegel Eisenstein series, Galois representations}
\thanks{The author is partially supported by 
 JSPS KAKENHI Grant Number (B) No.19H01778.}
\subjclass[2010]{}

\address{Takuya Yamauchi \\
Mathematical Inst. Tohoku Univ.\\
 6-3,Aoba, Aramaki, Aoba-Ku, Sendai 980-8578, JAPAN}
\email{yamauchi@math.tohoku.ac.jp}

\begin{document}
\begin{abstract} 
In this paper we study congruences between  
Siegel Eisenstein series and Siegel cusp forms for ${\rm Sp}_4(\Z)$. 

\end{abstract}
\maketitle

\section{Introduction}The congruences for automorphic forms have been studied throughly in several settings. 
The well-known prototype seems to be one related to Ramanujan delta function and the Eisenstein series of weight 12 for ${\rm SL}_2(\Z)$. 
Apart from its own interests, various important applications, for instance Iwasawa theory,  have been made since many individual examples found before. 
In \cite{Ribet} Ribet tactfully used Galois representations for elliptic cusp forms of level one to obtain the converse of the 
Herbrand's theorem in the class numbers for cyclotomic fields. Herbrand-Ribet's theorem is now understood in the context of 
Hida theory and Iwasawa main conjecture \cite{BCG}. The notion of congruence modules is developed by many authors (cf. \cite{Hida1},\cite{Ohta} 
among others). In this paper we study the congruence primes between Siegel Eisenstein series of degree 2 and Siegel cusp forms with respect to ${\rm Sp}_4(\Z)$. To explain the main claims we need to fix the notation. Let $\mathbb{H}_2=\{Z\in M_2(\C)\ |\ {}^tZ=Z>0\}$ be the Siegel 
upper half space of degree 2 and $\G:={\rm Sp}_4(\Z)=\Bigg\{\gamma\in {\rm GL}_4(\Z)\ \Bigg|\ {}^t\gamma\begin{pmatrix}
0_2 & E_2 \\
-E_2 & 0_2
\end{pmatrix}\gamma=\begin{pmatrix}
0_2 & E_2 \\
-E_2 & 0_2
\end{pmatrix} \Bigg\}$ where $E_2$ is the identity matrix of size 2. The group $\G$ acts on $\mathbb{H}_2$ by $\gamma Z:=(AZ+B)(CZ+D)^{-1}$ for 
$\gamma=\begin{pmatrix}
A & B \\
C & D
\end{pmatrix}\in \G$ and $Z\in \mathbb{H}_2$. For such $\gamma$ and $Z$ we also define the automorphic factor $j(\gamma,Z):=\det(CZ+D)$. Put $\G_\infty=
\Bigg\{\begin{pmatrix}
A & B \\
0_2 & D
\end{pmatrix}\in \G\Bigg\}$. 
For each positive even integer $k\ge 4$ let us consider the Siegel Eisenstein series of weight $k$ with respect to $\G$:  
$$E^{(2)}_{k}(Z)=\ds\sum_{\gamma\in \G_\infty\bs \G}\frac{1}{j(\gamma,Z)^k},\ Z\in \mathbb{H}_2.$$
This series convergences absolutely and it has the Fourier expansion  
$$E^{(2)}_{k}(Z)=\ds\sum_{T\in {\rm Sym}^2(\Z)^\ast_{\ge 0}}A(T)q^T,\ q^T={\rm exp}(2\pi\sqrt{-1}{\rm tr}(TZ))$$
where ${\rm Sym}^2(\Z)^\ast_{\ge 0}$ stands for the set of semi-positive 2 by 2 symmetric matrices whose diagonal entries are integers 
and the others are half integers. Let $B_n$ be the $n$-th Bernoulli number and $B_{n,\chi}$ the generalized $n$-th  Bernoulli number 
for a Dirichlet character $\chi$. 
It is well-known (see \cite{Kau},\cite{Maass}, and Corollary 2, p. 80 of \cite{EZ}) that $A_T$ is explicitly given as follows: 
\begin{itemize}
\item if rank$(T)=0$, hence $T=0_2$, then $A(0_2)=1$; 
\item if rank$(T)=1$, there exists $S\in {\rm GL}_2(\Z)$ such that ${}^t STS=\begin{pmatrix} n& 0\\ 0 &0\end{pmatrix}$ for some 
positive integer $n$. 
Then $A(T)=2\ds\frac{\sigma_{k-1}(n)}{\zeta(1-k)}=-\ds\frac{2k}{B_{k}}\sigma_{k-1}(n)$ where 
$\zeta(s)$ stands for the Riemann zeta function;
\item if rank$(T)=2$, 
\begin{equation}\label{st}
A(T)=2\frac{L(2-k,\chi_T)}{\zeta(1-k)\zeta(3-2k)}S_T=-4k\frac{B_{k-1,\chi_T}}{B_{k}B_{2k-2}},\ S_T:=\prod_{q|\det(2T)}F_q(T;q^{k-3})
\end{equation}
where $F_q(T,X)\in \Z[X]$ is the Siegel series for $T$ at any prime $q$ dividing $\det(2T)$ and $\chi_T$ is the quadratic character 
associated to the quadratic extension $\Q(\sqrt{-\det(2T)})/\Q$. Notice that $S_T\in \Z$ and it is known that $S_T=1$ for which $-\det(2T)$ is a fundamental discriminant. 
\end{itemize}
For each rational prime $p$ we fix the embeddings $\bQ\hookrightarrow \bQ_p,\ \bQ\hookrightarrow \C$ and an isomorphism $\C\simeq \bQ_p$ 
which is compatible with those embeddings. For any extension $K$ of $\Q$ or $\Q_p$ we denote by $\mathcal{O}=\mathcal{O}_K$ the ring of integers of $K$ 
and $\mathcal{P}=\mathcal{P}_K$ 
a prime ideal above $p$. We often drop the superscript ``$K$" if it is obvious from the context. 
For a Siegel modular form $F$ of degree 2  with the Fourier expansion  
$F(Z)=\ds\sum_{T\in {\rm Sym}^2(\Z)^\ast_{\ge 0}}A_F(T)q^T$ we say $F$ is defined over $\mathcal{O}$ if $A(T)\in \mathcal{O}$ 
 for any $T\in {\rm Sym}^2(\Z)^\ast_{\ge 0}$ and we often say $F$ is $\mathcal{O}$-integral if $F$ is defined over $\mathcal{O}$ for some $\mathcal{O}$.  
 For two integral Siegel modular forms $F,G$ of degree 2 with the Fourier expansions 
$F(Z)=\ds\sum_{T\in {\rm Sym}^2(\Z)^\ast_{\ge 0}}A_F(T)q^T,\ G(Z)=\ds\sum_{T\in {\rm Sym}^2(\Z)^\ast_{\ge 0}}A_G(T)q^T,\ 
A_F,A_G\in \mathcal{O}$ for some $\mathcal{O}$ we say 
$F$ is congruent to $G$ modulo $\mathcal{P}$ or simply we write $F\equiv G$ mod $\mathcal{P}$ if $A_F(T)\equiv A_G(T)$ mod $\mathcal{P}$ for any $T\in {\rm Sym}^2(\Z)^\ast_{\ge 0}$. We also say  $F$ is a non-trivial $\mathcal{O}$-integral 
Siegel modular form if 
$F\not\equiv 0$ mod $\mathcal{P}$. Hence there exists at least one $T\in {\rm Sym}^2(\Z)^\ast_{\ge 0}$ 
such that $A_F(T)\in \mathcal{O}^\times$. This implies that if further $F$ is a Hecke eigen form, then all Hecke eigenvalues are 
$\mathcal{O}$-integral.    
For two non-trivial $\mathcal{O}$-integral Siegel modular forms $F,G$, we write $F\equiv_{{\rm ev}}G$ mod $\mathcal{P}$ if 
they share all Hecke eigenvalues modulo $\mathcal{P}$. Obviously if $F\equiv G$ mod $\mathcal{P}$ and either of 
$F,G$ is non-trivial, then $F\equiv_{{\rm ev}}G$ mod $\mathcal{P}$. 

We normalize $E_k$ by 
\begin{equation}\label{ne}
G_k(Z):=\frac{1}{2}\zeta(1-k)\zeta(3-2k)E_k(Z).
\end{equation}
Now we are ready to state our main results:
\begin{theorem}\label{main1}
Let $k$ be even positive integer and $p\ge 7$ be a prime number such that $p-1>2k-2$. 
Assume that ${\rm ord}_p(B_{2k-2})>0$ and ${\rm ord}_p(B_{k-1,\chi_T})=0$ for some $T\in {\rm Sym}^2(\Z)^\ast_{> 0}$. 
Then there exists a non-trivial $\mathcal{O}$-integral Hecke eigen Siegel cusp form $F$ defined over  
 $\mathcal{O}$ for some finite extension $K/\Q$ such that $F\equiv_{{\rm ev}}G_k$ mod $\mathcal{P}$  for a prime ideal $\mathcal{P}$ of $\mathcal{O}$ dividing $p$. 
\end{theorem}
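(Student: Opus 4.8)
The plan is to carry out an Eisenstein-congruence argument of Ribet type, with the Siegel $\Phi$-operator playing the role of the constant-term map. \emph{First}, I would check that $G_k$ is $p$-integral, i.e.\ $G_k\in M_k({\rm Sp}_4(\Z))_{\Z_{(p)}}$: since $p\ge 7$ and $p-1>2k-2\ge k,k-1$, none of $k,2k-2,k-1$ is divisible by $p-1$, so by von Staudt--Clausen and its extension to quadratic twists (generalized von Staudt--Clausen) the denominators of $B_k$, $B_{2k-2}$ and $B_{k-1,\chi_T}$ are prime to $p$; together with $S_T\in\Z$ this makes every Fourier coefficient of $G_k=\tfrac12\zeta(1-k)\zeta(3-2k)E^{(2)}_k$ lie in $\Z_{(p)}$. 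For the non-vanishing mod $\mathcal P$, I would choose $T_0\in{\rm Sym}^2(\Z)^\ast_{>0}$ with $\chi_{T_0}=\chi_T$ and $-\det(2T_0)$ a fundamental discriminant; then $S_{T_0}=1$, so $A_{G_k}(T_0)=L(2-k,\chi_{T_0})=-B_{k-1,\chi_T}/(k-1)$ is a $p$-adic unit (using ${\rm ord}_p(B_{k-1,\chi_T})=0$ and $p\nmid k-1$), hence $\bar G_k:=G_k\bmod\mathcal P\ne 0$.

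\emph{Second}, I would apply the Siegel operator. From $\Phi(E^{(2)}_k)=E^{(1)}_k$ and $\tfrac12\zeta(1-k)\zeta(3-2k)=\dfrac{B_kB_{2k-2}}{4k(k-1)}$ one obtains
\[
\Phi(G_k)=\frac{B_{2k-2}}{2(k-1)}\Big(\frac{B_k}{2k}-\sum_{n\ge1}\sigma_{k-1}(n)q^n\Big),
\]
and here the series in parentheses is $\tfrac{B_k}{2k}E^{(1)}_k\in M_k({\rm SL}_2(\Z))_{\Z_{(p)}}$ while ${\rm ord}_p\!\big(\tfrac{B_{2k-2}}{2(k-1)}\big)={\rm ord}_p(B_{2k-2})>0$. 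Hence $\Phi(G_k)\equiv0\bmod\mathcal P$; equivalently $\bar G_k$ is a non-zero element of the kernel of the reduced Siegel operator $\bar\Phi$ on mod $\mathcal P$ Siegel modular forms of weight $k$.

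\emph{Third}, I would descend to a cusp form. Consider the short exact sequence of $\Z_{(p)}$-lattices
\[
0\longrightarrow S_k({\rm Sp}_4(\Z))_{\Z_{(p)}}\longrightarrow M_k({\rm Sp}_4(\Z))_{\Z_{(p)}}\stackrel{\Phi}{\longrightarrow}M_k({\rm SL}_2(\Z))_{\Z_{(p)}}\longrightarrow0,
\]
whose exactness on the right follows, for $p\ge7$, from Igusa's structure theorem: $M_\bullet({\rm Sp}_4(\Z))$ is generated by the $\Z_{(p)}$-integral forms $E^{(2)}_4,E^{(2)}_6,\chi_{10},\chi_{12},\chi_{35}$, with $\Phi$ a ring homomorphism sending $E^{(2)}_4\mapsto E^{(1)}_4$, $E^{(2)}_6\mapsto E^{(1)}_6$ and $\chi_{10},\chi_{12},\chi_{35}\mapsto0$, so $\Phi$ surjects onto $\Z_{(p)}[E^{(1)}_4,E^{(1)}_6]=M_\bullet({\rm SL}_2(\Z))_{\Z_{(p)}}$. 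Since the right-hand term is $\Z_{(p)}$-free the sequence splits and stays exact after $-\otimes\F_p$, so $\ker\bar\Phi$ is exactly the reduction of $S_k({\rm Sp}_4(\Z))_{\Z_{(p)}}$. Therefore $\bar G_k$ is the reduction of some non-zero $F'\in S_k({\rm Sp}_4(\Z))_{\Z_{(p)}}$, i.e.\ $A_{F'}(T)\equiv A_{G_k}(T)\bmod\mathcal P$ for all $T$. Finally, since the Hecke operators preserve $M_k({\rm Sp}_4(\Z))_{\Z_{(p)}}$ and commute with reduction mod $p$ (again using $p\ge7$) and $G_k$ is a Hecke eigenform, $\bar F'$ is a non-zero simultaneous Hecke eigenvector in $S_k({\rm Sp}_4(\Z))\otimes\F_p$ with eigenvalue system the reduction of that of $G_k$; the Deligne--Serre lifting lemma then produces a genuine $\mathcal O$-integral Hecke eigen cusp form $F$, over some finite $K/\Q$, with $F\equiv_{{\rm ev}}G_k\bmod\mathcal P$ for a prime $\mathcal P\mid p$, as desired.

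The two places I expect genuine work are in the first and third steps. Controlling the $p$-denominator of the generalized Bernoulli number $B_{k-1,\chi_T}$ is precisely where the hypothesis $p-1>2k-2$ is needed, the residual small primes being removed by $p\ge7$; and one must verify that the Siegel operator is \emph{integrally} surjective in weight $k$ (equivalently that ${\rm coker}\,\Phi$ carries no $p$-torsion), which rests on an explicit $\Z_{(p)}$-structure for degree-two Siegel modular forms. Granting those, the reduction-mod-$\mathcal P$ exact sequence together with the Deligne--Serre lemma complete the argument, the identity $\Phi(E^{(2)}_k)=E^{(1)}_k$ and the Fourier-coefficient bookkeeping being standard.
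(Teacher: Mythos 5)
Your argument is correct in outline and reaches the right conclusion, but the crucial middle step --- showing that $G_k$ mod $\mathcal P$ is the reduction of a characteristic-zero cusp form --- is carried out by a genuinely different route from the paper's. The paper works with \emph{geometric} Siegel modular forms: it introduces an auxiliary level-$3$ structure, uses the Chai--Faltings $q$-expansion principle and the Siegel operators at all $0$-dimensional cusps to identify mod $p$ cusp forms with forms whose degenerate Fourier coefficients vanish (Proposition \ref{b2}), invokes the Lan--Suh liftability theorem to lift mod $p$ cusp forms to $W(\overline{\F}_p)$, and descends back to level one by taking ${\rm Sp}_4(\Z/3\Z)$-invariants (this is where $p\ge 7$ enters); the Deligne--Serre lemma then produces the eigenform, exactly as in your final step. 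You instead stay entirely classical: you prove integral surjectivity of the single Siegel operator $\Phi\colon M_k({\rm Sp}_4(\Z))_{\Z_{(p)}}\to M_k({\rm SL}_2(\Z))_{\Z_{(p)}}$ and deduce $\ker\bar\Phi=S_k({\rm Sp}_4(\Z))_{\Z_{(p)}}\otimes\F_p$ from freeness of the cokernel. This is more elementary and self-contained (and in fact needs only $p\ge 5$), at the cost of being special to degree two and level one, where $\Phi$-surjectivity is accessible; the paper's geometric machinery is what one would need in more general situations. Two remarks on your write-up: first, you do not need the full integral Igusa structure theorem $M_\bullet({\rm Sp}_4(\Z))_{\Z_{(p)}}=\Z_{(p)}[E^{(2)}_4,E^{(2)}_6,\chi_{10},\chi_{12},\chi_{35}]$, which is a delicate statement requiring careful normalization of the cuspidal generators --- for surjectivity of $\Phi$ it suffices that $E^{(2)}_4$ and $E^{(2)}_6$ are $\Z$-integral with constant term $1$ and that $M_\bullet({\rm SL}_2(\Z))_{\Z_{(p)}}=\Z_{(p)}[E^{(1)}_4,E^{(1)}_6]$ for $p\ge 5$, so you should phrase that step accordingly rather than resting it on the stronger unproved claim. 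Second, your choice of $T_0$ with $-\det(2T_0)$ a fundamental discriminant, forcing $S_{T_0}=1$ and hence $A_{G_k}(T_0)\in\Z_{(p)}^\times$, is a point the paper leaves implicit and is worth keeping explicit.
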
 
As explained later the condition $p-1>2k-2$ implies that $A_{G_k}(T)\in \Z_p$ for any $T\in {\rm Sym}^2(\Z)^\ast_{\ge 0}$. 
It is natural to ask an explicit form of $F$ in the theorem above. 
In fact, we can realize it as a Saito-Kurokawa lift.    
\begin{theorem}\label{main-SK}Let $k$ be even positive integer and $p\ge 7$ be a prime number such that $p-1>2k-2$. 
Assume that ${\rm ord}_p(B_{2k-2})>0$ and ${\rm ord}_p(B_{k-1,\chi})=0$ 
for some quadratic character $\chi$ of $G_\Q$.  
Then there exist  $\mathcal{O}_K$-integral Saito-Kurokawa lifts $F_i=SK(f_i),\ (1\le i \le t)$ for some $f_i\in S_{2k-2}({\rm SL}_2(\Z)), (1\le i \le t)$ and for some 
finite extension $K/\Q$ such that $\ds\sum_{i=1}^tF$ is congruent to $G_k$ modulo a prime $\mathcal{P}$ of $\mathcal{O}$ dividing $p$. 
\end{theorem}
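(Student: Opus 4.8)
The plan is to combine Theorem \ref{main1} with the structure of the Saito–Kurokawa correspondence and Maass's "Spezialschar" characterization. First I would analyze the Fourier coefficients of $G_k$ more carefully. By the formula \eqref{st} together with the normalization \eqref{ne}, for $T\in{\rm Sym}^2(\Z)^\ast_{>0}$ the coefficient $A_{G_k}(T)$ is, up to the unit factor $-2k$ and the integral quantity $S_T$, equal to $B_{k-1,\chi_T}/B_{2k-2}$. Since ${\rm ord}_p(B_{2k-2})>0$ while ${\rm ord}_p(B_{k-1,\chi})=0$ for some quadratic character $\chi$, one finds a $T$ with ${\rm ord}_p(A_{G_k}(T))<0$ a priori — but the hypothesis $p-1>2k-2$ (via the Kummer congruences / von Staudt–Clausen, as the remark after Theorem \ref{main1} promises) forces $A_{G_k}(T)\in\Z_p$ for all $T$, and moreover forces the rank-$1$ coefficients $-\tfrac{2k}{B_k}\sigma_{k-1}(n)$ to be $p$-integral as well. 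Thus $G_k$ is a non-trivial $\O$-integral Siegel modular form, and the congruence $\mathrm{ord}_p(B_{2k-2})>0$ says that $G_k$ is "$p$-adically close to being Eisenstein along the Klingen boundary", in the sense that its behaviour is governed by $\zeta(3-2k)\equiv 0$.

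Next I would invoke Theorem \ref{main1}: there is a Hecke eigen Siegel cusp form $F$ of weight $k$ for ${\rm Sp}_4(\Z)$, defined over some $\O_K$, with $F\equiv_{\mathrm{ev}}G_k\pmod{\mathcal P}$. The key additional input is that $G_k$ lies in the Maass Spezialschar: its rank-$1$ and rank-$2$ Fourier coefficients satisfy the Maass relations (this is classical for the Siegel Eisenstein series of degree two, since it equals the Klingen lift of the elliptic Eisenstein series up to a constant, and the latter is in the Spezialschar). Reducing the Maass relations modulo $\mathcal P$ and using $F\equiv_{\mathrm{ev}}G_k$, I would argue that the Hecke eigensystem of $F$ modulo $\mathcal P$ coincides with the eigensystem of a Saito–Kurokawa lift: concretely, the eigensystem of $G_k\bmod\mathcal P$ factors through the Eisenstein ideal on the elliptic side, and by the Deligne–Serre lemma (or the existence of $p$-adic families / the fact that the Spezialschar is Hecke-stable and its Hecke algebra surjects onto $T_{2k-2}({\rm SL}_2(\Z))$) there exist normalized Hecke eigenforms $f_1,\dots,f_t\in S_{2k-2}({\rm SL}_2(\Z))$ whose associated Saito–Kurokawa lifts $F_i=SK(f_i)$ account, modulo $\mathcal P$, for the full $\mathcal P$-torsion eigenspace; each $f_i$ is congruent modulo $\mathcal P$ to the weight-$(2k-2)$ Eisenstein series $E_{2k-2}$, which exists precisely because ${\rm ord}_p(B_{2k-2})>0$.

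Finally I would assemble the actual congruence of modular forms (not merely of eigenvalues). After rescaling each $SK(f_i)$ to be $\O_K$-integral and primitive, the span of $\{SK(f_i)\}_{i=1}^t$ together with $G_k$ inside the $\O$-lattice of Spezialschar forms of weight $k$ has the property that $G_k$ reduces into the reduction of that span; this is a standard lattice/"linear algebra over $\O/\mathcal P$" argument, using that the Fourier coefficient pairing is perfect after inverting nothing more than the residue characteristic's irrelevant primes, and that the Maass lift $SK:S_{2k-2}({\rm SL}_2(\Z))\to S_k^{\rm Maass}({\rm Sp}_4(\Z))$ is an isomorphism compatible with integral structures up to controlled denominators. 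Choosing coefficients $c_i\in\O_K$ accordingly and absorbing them (or passing to a further finite extension so that one may take $c_i=1$ after re-scaling the $f_i$), I get $\sum_{i=1}^t SK(f_i)\equiv G_k\pmod{\mathcal P}$. The main obstacle I anticipate is the last step: upgrading the eigenvalue congruence of Theorem \ref{main1} to a genuine Fourier-coefficient congruence with an \emph{explicit} sum of lifts requires controlling the integral structure of the Spezialschar and the denominators in the Maass isomorphism, i.e. showing the relevant Hecke module is $\mathcal P$-torsion-free apart from the Eisenstein congruence one is exploiting; handling the possible multiplicity $t>1$ (several cuspidal eigenforms sharing the Eisenstein congruence mod $\mathcal P$) and ruling out spurious contributions from non-Spezialschar forms is where the real work lies.
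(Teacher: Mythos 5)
There is a genuine gap at the step you yourself flag as ``where the real work lies,'' and it is not a technicality that a lattice argument will fix. For degree two Siegel modular forms the Fourier coefficients are not determined by the Hecke eigenvalues, so the output of Theorem \ref{main1} --- a congruence $F\equiv_{\rm ev}G_k$ of eigensystems --- cannot be upgraded to the Fourier-coefficient congruence $\sum_i SK(f_i)\equiv G_k$ that the theorem asserts by ``linear algebra over $\O/\mathcal P$'' on the Spezialschar; you would at best recover the eigenvalue statement again. Moreover, the coefficients $A_{SK(f_i)}(T)$ are not built from the coefficients of $f_i$ at all: they are built from the coefficients of the \emph{half-integral weight} form $h_i\in S^+_{k-\frac12}(\G^{(1)}_0(4))$ with $Sh(h_i)=f_i$, via Ikeda's formula $A_{SK(f_i)}(T)=a_{h_i}(\det(2T))\prod_{q\mid \det(2T)}S_{\det(2T),q}(a_{\phi_i}(q))$. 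So a congruence $f_i\equiv G^{(1)}_{2k-2}$ on the integral-weight side, which is all your argument produces, says nothing directly about $A_{SK(f_i)}(T)$; the Shimura correspondence does not transport Fourier-coefficient congruences backwards.

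The paper's proof supplies exactly the ingredient your plan is missing: it works on the Kohnen plus space from the start. The hypothesis ${\rm ord}_p(B_{k-1,\chi})=0$ is used to show that Cohen's Eisenstein series $H_{k-\frac12}$ is nonzero mod $\mathcal P$ (since $H(k-1,|D|)=-B_{k-1,\chi_D}/(k-1)$ is a unit for a suitable $D$), whence Theorem 4 of Datskovsky--Guerzhoy gives Hecke eigen cusp forms $h_1,\dots,h_t$ with a genuine Fourier-coefficient congruence $H_{k-\frac12}\equiv\sum_i h_i$ and with each $\phi_i=Sh(h_i)\equiv G^{(1)}_{2k-2}$. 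Feeding $a_{h_i}(\det(2T))$ and $a_{\phi_i}(q)\equiv 1+q^{2k-3}$ into Ikeda's formula, and comparing with $A_{G_k}(T)=H(k-1,\det(2T))\prod_{q\mid\det(2T)}S_{\det(2T),q}(1+q^{2k-3})$ for $T>0$ (the degenerate coefficients of $G_k$ being killed mod $p$ by ${\rm ord}_p(B_{2k-2})>0$), yields the coefficientwise congruence directly, with no appeal to Theorem \ref{main1} and no need to rule out non-Spezialschar contributions. If you want to salvage your route, you must replace the final paragraph by this half-integral weight argument or an equivalent.
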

\begin{remark}The number of Saito-Kurokawa lifts in the claim above are exactly given as one of Hecke eigen cusp forms in $S_{2k-2}({\rm SL}_2(\Z))$ which are congruent to $G^{(1)}_{2k-2}$ modulo $\mathcal{P}$ (see the proof of Proposition \ref{sc}). 
Here   $G^{(1)}_{2k-2}$ is the Eisenstein series of weight $2k-2$ for ${\rm SL}_2(\Z)$ with the constant term $\zeta(3-2k)$.   
\end{remark}

We also study a weak version of the converse of Theorem \ref{main1}. Let $h^+_p$ be the plus part of the ideal class group of $\Q(\zeta_p)$ where 
$\zeta_p$ is a primitive $p$-th root of unity. 
\begin{theorem}\label{main2} Let $k$ be even positive integer and $p\ge 7$ be a prime number such that $p-1>2k-2$.   
If there exists a non-trivial $\mathcal{O}_K$-integral Hecke eigen Siegel cusp form $F$ for some number field $K$ which is congruent to $G_{k}$ modulo some prime ideal above $p$, 
then ${\rm ord}_p(B_kB_{2k-2}h^+_p)>0$. In particular, if Vandiver conjecture is true (hence $p\nmid h^+_p$) and ${\rm ord}_p(B_k)=0$, then it follows that ${\rm ord}_p(B_{2k-2})>0$. 
\end{theorem}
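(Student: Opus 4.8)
The plan is to reduce the statement to a question about Galois representations: the hypothesis gives a Hecke eigen cusp form $F\in S_k({\rm Sp}_4(\Z))$ over $\mathcal{O}=\mathcal{O}_K$ with, in particular, $F\equiv_{\rm ev}G_k\bmod\mathcal{P}$, and I would attach to $F$ its spin Galois representation $\rho_F\colon G_\Q\to{\rm GL}_4(\bQp)$ (Weissauer--Taylor--Laumon), which is unramified outside $p$ and crystalline at $p$ with Hodge--Tate weights $\{0,k-2,k-1,2k-3\}$, lying in the Fontaine--Laffaille range because $p-1>2k-2$. Since the spin $L$-function of $E^{(2)}_k$ factors as $\zeta(s)\zeta(s-k+2)\zeta(s-k+1)\zeta(s-2k+3)$, the semisimple pseudorepresentation attached to $G_k$ is $1\oplus\chi^{k-2}\oplus\chi^{k-1}\oplus\chi^{2k-3}$ ($\chi$ the $p$-adic cyclotomic character, $1$ the trivial character), so the congruence forces $\bar\rho_F^{\rm ss}\cong 1\oplus\bar\chi^{k-2}\oplus\bar\chi^{k-1}\oplus\bar\chi^{2k-3}$, and the four characters are pairwise distinct since $0<k-2<k-1<2k-3<p-1$.

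Next I would use that at level one a cuspidal Hecke eigenform is either a Saito--Kurokawa lift or of general type (there are no cuspidal Yoshida or Klingen--Eisenstein forms of level one), and that in the general-type case $\rho_F$ is irreducible. If $F=SK(f)$ with $f\in S_{2k-2}({\rm SL}_2(\Z))$, then $\rho_F\cong\rho_f\oplus\chi^{k-1}\oplus\chi^{k-2}$, so comparing residual representations forces $\bar\rho_f^{\rm ss}\cong 1\oplus\bar\chi^{2k-3}$, i.e.\ $f\equiv G^{(1)}_{2k-2}\bmod\mathcal{P}$; comparing constant terms ($f$ cuspidal, $G^{(1)}_{2k-2}$ with constant term $\zeta(3-2k)=-B_{2k-2}/(2k-2)$, and $p>2k-2$) gives ${\rm ord}_p(B_{2k-2})>0$, which is stronger than claimed. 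In the general-type case I would instead choose a $G_\Q$-stable $\mathcal{O}$-lattice so that $\bar\rho_F$ is non-semisimple (Ribet's lemma, using irreducibility of $\rho_F$), obtaining a non-split subquotient extension of one of the four characters by another and hence a non-zero class $c\in H^1_f(\Q,\bar\chi^m)$ for a difference $m$ of $\{0,k-2,k-1,2k-3\}$, with the finite condition at $p$ being crystalline (Fontaine--Laffaille) and the condition away from $p$ being unramifiedness, both inherited from $\rho_F$.

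Because $\rho_F$ is irreducible, $c$ cannot come from a characteristic-zero reducible subquotient, so it detects the torsion of $H^1_f(\Q,\Q_p/\Z_p(m))$; via the Iwasawa main conjecture over $\Q$ (Mazur--Wiles), the symplectic self-duality $\rho_F\cong\rho_F^\vee\otimes\chi^{2k-3}$ and the reflection theorem, this torsion can be non-trivial only if $p$ divides $B_k$ (the $\zeta(1-k)$ layer, $m\equiv\pm(k-1)$), $B_{2k-2}$ (the $\zeta(3-2k)$ layer, $m\equiv\pm(2k-3)$), or an even eigenspace of the $p$-part of the class group of $\Q(\zeta_p)$, i.e.\ $p\mid h^+_p$ (the $m\equiv\pm(k-2)$ layer; the $m\equiv\pm1$ layer gives trivial Selmer groups). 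Hence ${\rm ord}_p(B_kB_{2k-2}h^+_p)>0$. For the final assertion: under Vandiver's conjecture $p\nmid h^+_p$, and then ${\rm ord}_p(B_k)=0$ leaves ${\rm ord}_p(B_{2k-2})>0$ as the only possibility.

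The hard part is the last step together with the lattice analysis feeding it: running Ribet's lemma cleanly when $\bar\rho_F$ has four Jordan--Hölder constituents (so several inequivalent lattices occur), fixing the precise local condition at $p$ (crystalline versus ordinary according to the $p$-slopes of $F$), and above all the bookkeeping that ties the non-vanishing to exactly $B_k$, $B_{2k-2}$ and $h^+_p$ and to nothing else --- this is where the self-duality of $\rho_F$, the reflection theorem, and the hypothesis $p-1>2k-2$ (keeping the relevant exponents distinct and Fontaine--Laffaille) are all essential. (If, on the other hand, ``congruent'' is read as a congruence of Fourier coefficients, then the Siegel $\Phi$-operator applied to $F\equiv G_k$, together with $\Phi(F)=0$ and $\Phi(G_k)=\tfrac12\zeta(1-k)\zeta(3-2k)E^{(1)}_k$, gives ${\rm ord}_p(B_kB_{2k-2})>0$ at once.)
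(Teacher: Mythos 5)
Your overall architecture is the same as the paper's: split into the Saito--Kurokawa case and the general-type case, attach the spin Galois representation, identify $(\iota\circ\br_{F,p})^{\rm ss}$ with $\textbf{1}\oplus\ve_p^{k-2}\oplus\ve_p^{k-1}\oplus\ve_p^{2k-3}$, produce a non-split lattice by Ribet's method, and convert extension classes into divisibilities of $B_k$, $B_{2k-2}$, $h^+_p$ via Herbrand--Ribet/Mazur--Wiles. The Saito--Kurokawa case is handled essentially as in the paper and is fine. Your closing parenthetical is also a fair point: with the paper's literal definition of ``congruent'' (all Fourier coefficients, including degenerate $T$), the statement is immediate from $A_F(T)=0$ versus $A_{G_k}(T)=-\frac{B_{2k-2}}{2(2k-2)}\sigma_{k-1}(n)$ for rank-one $T$; the substantive theorem, and the one the paper actually proves, is the Hecke-eigenvalue version.

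The genuine gap is in the general-type case, and it sits exactly where you wave at ``self-duality, the reflection theorem, and the bookkeeping.'' First, the generic form of Ribet's lemma for a $4$-dimensional irreducible representation with four distinct residual characters only guarantees that \emph{some} lattice has non-semisimple reduction, i.e.\ \emph{one} non-split extension between \emph{one ordered pair} of the characters; that is not enough to force the conclusion, because the pair you get may not be one you can use. The paper needs and proves a sharper statement (Theorem \ref{galois}): a single lattice whose reduction is upper triangular \emph{and} whose $2{+}2$ and $1{+}3$ block extensions are both non-split, and then runs an explicit case analysis on which character occupies which diagonal slot, repeatedly conjugating by diagonal matrices $\mathrm{diag}(\varpi,\dots)$ to kill or expose entries. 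You have no substitute for this step. Second, your dictionary treats $m$ and $-m$ symmetrically (``the $m\equiv\pm(k-1)$ layer gives $B_k$,'' etc.), but the two directions of an extension between $\ve_i$ and $\ve_j$ land in $H^1_f(\Q,\F(\ve_p^{m}))$ and $H^1_f(\Q,\F(\ve_p^{-m}))$, which detect \emph{reflected} eigenspaces of the class group of $\Q(\zeta_p)$ and hence are governed by \emph{different} Bernoulli numbers (an extension with sub $\ve_p^{2k-3}$ and quotient $\textbf{1}$ relates to the index $\equiv 4-2k$, not to $B_{2k-2}$; similarly for $\pm(k-1)$ versus $B_k$). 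Ruling out, or absorbing into $h^+_p$, the ``wrong-direction'' classes is precisely what the paper's case analysis is for, and the reflection theorem alone does not give you the implication you need (non-vanishing of an odd eigenspace does not bound the reflected even one from below). Until this bookkeeping is actually carried out --- which pairs can occur, in which direction, for the specific lattice produced --- the proof is not complete.
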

The following claim is an obvious conclusion of the theorem above and it explains Siegel cusp forms tends to 
be residually far from Siegel Eisenstein series:  
\begin{corollary}Let $k$ be an even positive integer and $p\ge 7$ be a prime number such that $p-1>2k-2$. 
Assume that ${\rm ord}_p(B_kB_{2k-2}h^+_p)=0$. Then there is no Hecke eigen cusp form which is congruent to 
$G_k$ modulo a prime over $p$. In particular if Vandiver's conjecture is true, then the claim is true under the condition that 
${\rm ord}_p(B_kB_{2k-2})=0$ which is depending only on $k$.  
\end{corollary}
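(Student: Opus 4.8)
The plan is to obtain this Corollary as the contrapositive of Theorem \ref{main2}, supplemented by an elementary unpacking of Vandiver's conjecture. So I would argue by contradiction: suppose there is a Hecke eigen Siegel cusp form $F$ of degree $2$ and weight $k$, with Fourier coefficients generating a number field $K$, that is congruent to $G_k$ modulo a prime $\mathcal{P}$ of $\mathcal{O}_K$ above $p$. The only point that is not literally immediate before invoking Theorem \ref{main2} is that $F$ must be a \emph{non-trivial} $\mathcal{O}_K$-integral form. Integrality is built into the very meaning of the congruence $F\equiv G_k\bmod\mathcal{P}$ (enlarging $K$ if necessary so that both $F$ and the rational form $G_k$ have $\mathcal{P}$-integral coefficients, the latter being guaranteed by $p-1>2k-2$ as recalled right after Theorem \ref{main1}). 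For non-triviality I would compute the constant term: since $\zeta(1-k)=-B_k/k$ and $\zeta(3-2k)=-B_{2k-2}/(2k-2)$, the normalization \eqref{ne} gives $A_{G_k}(0_2)=\tfrac12\zeta(1-k)\zeta(3-2k)=B_kB_{2k-2}/\bigl(2k(2k-2)\bigr)$.

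Next I would observe that $p-1>2k-2$ forces $p>2k-1$, hence $p\nmid 2k(2k-2)$ (the prime $p$ is odd and $p>2k-1>\max(k,2k-2)$ for even $k\ge 2$). Combined with ${\rm ord}_p(B_kB_{2k-2})=0$ — which is part of the standing hypothesis ${\rm ord}_p(B_kB_{2k-2}h_p^+)=0$ — this yields $A_{G_k}(0_2)\in\mathcal{O}_{K,\mathcal{P}}^\times$, so $A_F(0_2)\equiv A_{G_k}(0_2)\not\equiv 0\bmod\mathcal{P}$ and therefore $F\not\equiv 0\bmod\mathcal{P}$. Now Theorem \ref{main2} applies to $F$ verbatim and delivers ${\rm ord}_p(B_kB_{2k-2}h_p^+)>0$, contradicting the hypothesis ${\rm ord}_p(B_kB_{2k-2}h_p^+)=0$. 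Hence no such $F$ exists, which is the first assertion of the Corollary.

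For the final sentence I would simply note that Vandiver's conjecture for $p$ is exactly the statement $p\nmid h_p^+$, i.e. ${\rm ord}_p(h_p^+)=0$; additivity of the valuation then gives ${\rm ord}_p(B_kB_{2k-2}h_p^+)={\rm ord}_p(B_kB_{2k-2})$, so under Vandiver the hypothesis ${\rm ord}_p(B_kB_{2k-2})=0$ already suffices to rule out all such congruences. And since the numerators of $B_k$ and of $B_{2k-2}$ are fixed rational integers once $k$ is chosen, whether this last condition holds for a given $p$ is governed entirely by $k$ (it asks that $p$ lie outside the finite set of primes dividing those numerators). There is essentially no obstacle in this argument: everything reduces to Theorem \ref{main2}, and the single step requiring a word — the automatic non-triviality of $F$ — is precisely what the constant-term computation above secures.
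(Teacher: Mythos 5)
Your proof is correct and takes the route the paper intends: the Corollary is presented as ``an obvious conclusion'' of Theorem \ref{main2}, i.e.\ its contrapositive, and the only bridging work is the integrality and non-triviality of $F$, which you supply. One remark on the non-triviality step, though. Since $F$ is a \emph{cusp} form, $A_F(0_2)=0$; so the chain $A_F(0_2)\equiv A_{G_k}(0_2)\not\equiv 0\bmod\mathcal{P}$ that you use to conclude $F\not\equiv 0$ is already a contradiction all by itself. Under the paper's definition of $\equiv$ (congruence of \emph{all} Fourier coefficients, degenerate $T$ included), the hypothesis ${\rm ord}_p(B_kB_{2k-2})=0$ together with $p\nmid 2k(2k-2)$ kills the congruence at $T=0_2$ outright, and neither Theorem \ref{main2} nor the class-number factor $h_p^+$ is ever needed. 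Your argument is still logically valid --- in a reductio it is harmless to derive $F\not\equiv 0$ from inconsistent premises and then invoke the theorem --- but you should realize that what you present as establishing non-triviality is in fact the contradiction itself. The appeal to Theorem \ref{main2} (and hence the presence of $h_p^+$ in the hypothesis) only becomes essential if ``congruent'' is read as the eigenvalue congruence $\equiv_{{\rm ev}}$, in which case non-triviality is built into the definition and there is nothing left to check before applying the theorem.
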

Some experts may think of the case when $k$ is odd. In fact we can study the congruence of Siegel cusp forms of 
odd weights by using mod $p$ Galois representations instead of Siegel Eisenstein series. 
\begin{theorem}\label{main3} Let $k$ be an odd positive integer and $p\ge 7$ be a prime number such that $p-1>2k-2$. 
Let $F$ be a non-trivial $\mathcal{O}_K$-integral Hecke eigen Siegel cusp form whose mod $p$ Galois representation 
$\br_{F,p}$ 
is completely reducible.   
Then ${\rm ord}_p(B_{k-1}B_{2k-2}h^+_p)>0$. In particular, if Vandiver conjecture is true and ${\rm ord}_p(B_{k-1})=0$, then it follows that ${\rm ord}_p(B_{2k-2})>0$. 
\end{theorem}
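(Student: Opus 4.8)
The plan is to transport the Galois-theoretic argument behind Theorem~\ref{main2} to the odd-weight case, the extra work being to recover the ``Siegel--Eisenstein shape'' of the residual representation although no odd-weight Siegel Eisenstein series is available as a congruence partner. First I would attach to the Hecke eigen cusp form $F$ of weight $k$ its spinor Galois representation $\rho_{F,p}\colon G_\Q\to{\rm GSp}_4(\bQp)$ (Weissauer, Taylor, Laumon): it is unramified outside $p$, has similitude character $\overline{\chi}_p^{\,2k-3}$ where $\overline{\chi}_p$ denotes the mod~$p$ cyclotomic character, and is crystalline at $p$ with Hodge--Tate weights $\{0,k-2,k-1,2k-3\}$; since $p-1>2k-2$ these lie in the Fontaine--Laffaille range $[0,p-2]$, the four twists $\mathbf 1,\overline{\chi}_p^{\,k-2},\overline{\chi}_p^{\,k-1},\overline{\chi}_p^{\,2k-3}$ are pairwise inequivalent, and $k-1$ and $2k-2$ lie in $[2,p-3]$. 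Since $k$ is odd and the level is one, $F$ cannot be a Saito--Kurokawa lift (its weight $k$ would have to be even) and there is no other CAP or endoscopic holomorphic cuspidal eigenform of level one and scalar weight, so $F$ is of general type and $\rho_{F,p}$ is irreducible.

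Next I would determine $\br_{F,p}^{ss}$. Write $\br_{F,p}=\bigoplus_i V_i$ as a direct sum of irreducibles, at least two of them by hypothesis. Each $V_i$ is either a power $\overline{\chi}_p^{\,a}$ (the only characters of $G_\Q$ unramified outside $p$, the class number of $\Q$ being one), with $a$ one of $0,k-2,k-1,2k-3$ modulo $p-1$, or a two-dimensional odd irreducible representation, which by Serre's conjecture comes from a level-one cusp eigenform and so has even Serre weight. Imposing that the Hodge--Tate weights form a submultiset of $\{0,k-2,k-1,2k-3\}$, that the symplectic self-duality with similitude $\overline{\chi}_p^{\,2k-3}$ pairs up the $V_i$, that $S_2({\rm SL}_2(\Z))=0$ forbids a weight-two constituent, and --- crucially --- that Ribet's lemma rules out any configuration all of whose $G_\Q$-stable lattices reduce semisimply (for instance a two-dimensional symplectically self-dual constituent would split off, and its complement --- an extension of $\overline{\chi}_p^{\,k-1}$ by $\overline{\chi}_p^{\,k-2}$ whose relevant crystalline ${\rm Ext}^1$ vanishes --- would split too, so every reduction would be semisimple, against irreducibility of $\rho_{F,p}$), one is left with
\[
\br_{F,p}^{ss}\ \cong\ \mathbf 1\ \oplus\ \overline{\chi}_p^{\,k-2}\ \oplus\ \overline{\chi}_p^{\,k-1}\ \oplus\ \overline{\chi}_p^{\,2k-3},
\]
exactly the residual shape of the degree-two Siegel Eisenstein series of even weight $k$.

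Finally I would feed this into Ribet's lemma for irreducible, essentially self-dual four-dimensional representations (Urban; Skinner--Urban): a suitable $G_\Q$-stable lattice in $\rho_{F,p}$ reduces non-semisimply, producing a non-zero class in the crystalline Selmer group $H^1_f(G_\Q,\overline{\chi}_p^{\,d})$ for some difference $d$ of two of the Hodge--Tate weights, and the Fontaine--Laffaille structure at $p$ pins $d$ down to one of $2-k$, $3-2k$, $1-k$ (modulo $p-1$). Restricting to $\Q(\zeta_p)$ and taking $\omega$-eigenspaces ($\omega$ the Teichm\"uller character), a non-zero such class forces the $\omega^{d}$-eigenspace of the $p$-part of ${\rm Cl}(\Q(\zeta_p))$ to be non-zero: when $d\equiv 1-k$ the eigenspace index is even, so $p\mid h_p^+$; when $d\equiv 2-k$ or $3-2k$ the index is odd, and the Herbrand--Ribet theorem converts this to $p\mid B_n$ for the even $n\equiv 1-d\pmod{p-1}$, i.e.\ $n\equiv k-1$ or $n\equiv 2k-2$, hence $p\mid B_{k-1}$ or $p\mid B_{2k-2}$ by Kummer's congruences since $k-1,2k-2\in[2,p-3]$. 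In every case ${\rm ord}_p(B_{k-1}B_{2k-2}h_p^+)>0$; under Vandiver's conjecture $p\nmid h_p^+$, so ${\rm ord}_p(B_{k-1})=0$ then forces ${\rm ord}_p(B_{2k-2})>0$. I expect the main obstacles to be the residual classification of the second paragraph --- excluding every mixed configuration by combining Serre-weight restrictions, the symplectic pairing on the Jordan--H\"older factors, and Ribet's lemma --- and the careful bookkeeping of the extension at $p$ (the Fontaine--Laffaille orientation pinning down $d$, and the passage from the Selmer class to the class-group eigenspace), since a slip there would replace a Bernoulli factor by an adjoint $L$-value of an elliptic cusp form.
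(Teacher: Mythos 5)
Your proposal follows the same route as the paper's, which reduces this theorem almost entirely to the proof of Theorem \ref{main2}: the Fontaine--Laffaille classification of $\iota\circ\rho_{F,p}|_{G_{\Q_p}}$ (Dieulefait) together with the complete-reducibility hypothesis pins down $(\iota\circ\br_{F,p})^{{\rm ss}}\simeq\textbf{1}\oplus\ve^{k-2}_p\oplus\ve^{k-1}_p\oplus\ve^{2k-3}_p$, and then the lattice/Ribet argument is rerun with the parities of the exponents flipped ($k$ now odd), which is exactly your Herbrand--Ribet/$h^+_p$ endgame. Two points of divergence are worth noting. First, you rule out two-dimensional residual constituents via Serre's conjecture; the paper reads ``completely reducible'' as ``all Jordan--H\"older factors are characters'' and skips this --- your version is more cautious but would still need to dispose of even two-dimensional and $3+1$ configurations, and it is not required for the theorem as the paper intends it. Second, and more substantively, your claim that Ribet's lemma plus the Fontaine--Laffaille structure ``pins $d$ down to one of $2-k,\ 3-2k,\ 1-k$'' is precisely the step the paper does not take for granted: an arbitrary non-split subquotient extension could a priori realize any of the differences $\pm1,\pm(k-2),\pm(k-1),\pm(2k-3)$, and for the positive odd differences $H^1_f(G_\Q,\F(\ve^d_p))$ can be non-zero (Soul\'e classes) without any Bernoulli divisibility, so one must show the non-split extension can be moved into a ``downward'' slot (higher Hodge--Tate weight as sub), where the class is unramified at $p$ and detects a class-group eigenspace. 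That relocation is the actual content of the paper's proof of Theorem \ref{main2} --- the repeated conjugations by ${\rm diag}(\varpi,\ldots)$ and permutation matrices, with Theorem \ref{galois}(1) guaranteeing non-splitness in the needed position --- so the ``careful bookkeeping'' you defer is where all the work lives; the rest of your outline matches the paper.
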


In general any genuine Siegel cusp form $F$ of weight $k\ge 3$ can not be congruent to Siegel Eisenstein series modulo $p$ for 
all but finitely many prime $p$ by Theorem B of \cite{Weiss}. Therefore the novelty of this paper is to 
detect congruence primes related to Siegel Eisenstein series in terms of weight and the results show 
they are completely depending on the weight $k$ provided if Vandiver's conjecture is true.      

As for Theorem \ref{main1}, a key is to use some geometric techniques to find such a Siegel cusp form out 
where we require $p\ge 7$ to connect the classical Siegel modular forms with the geometric Siegel modular forms because 
our forms is of level one, hence there is no direct way to define them in terms of geometry. 
For the claim of Theorem \ref{main2}, we use some techniques in Galois representations of Siegel modular forms which are borrowed from Ribet' argument in \cite{Ribet}. They are well-known for experts. It is natural to ask any other property for the Siegel cusp form $F$ in Theorem \ref{main1}. 
Since we focus only on the level one, there are exactly two kinds of Siegel cusp forms of weight $k\ge 4$ 
(in fact, it should be ``$k\ge 10$" to exist a non-zero cusp form). One is so called 
a Saito-Kurokawa lift and the other one is called a genuine form so that its $p$-adic Galois representation is 
(absolutely) irreducible for any prime $p$. As is done by Brown \cite{Brown} in addition to the condition of Theorem \ref{main1} if 
we further impose some kind of divisibility conditions for $L$-values of $f$,  then we may have a genuine form $G$ which is 
congruent to $G_k$. However it may very rarely happen as long as we assume the level to be one though we do not pursue it here.  

This paper is organized as follows. In Section 2 we summarize the basic properties of Siegel modular forms and its geometric counterpart. 
In Section 3 we discuss the $p$-integrality of the Fourier coefficients of Siegel Eisenstein series. 
We devote Section 4, Section 5, and Section 6 for proofs of the main theorems. 
In the last section we give a few examples. 

\textbf{Acknowledgments.} The author would like to thank Florian Herzig, Can-Ho Kim, Iwao Kimura, Ariel Weiss for helpful comments and many valuable discussions. 
In particular, Herzig and Weiss kindly informed the author an error of \cite{CG} for 
the irreducibility of mod $p$ Galois representations attached to RAESDC automorphic representations.    
This work started when the author visited Pavel Guerzhoy at University of Hawaii. 
The author would also like to thank him for valuable discussions and the university for the incredible hospitality. 

\section{The basics of Siegel modular forms}
In this section we refer to \cite{Taylor-thesis}, \cite{vG} for the classical Siegel modular forms and Chapter V of \cite{CF} for the geometric 
Siegel modular forms. 

Let us fix the notation. Put $J=\begin{pmatrix} 0_2& I_2\\ -I_2 &0_2\end{pmatrix}$. 
For any commutative ring $R$ we define ${\rm GSp}_4(R)=\{g\in M_4(R)\ |\ {}^t XJX=\nu(g)J,\ \nu(g)\in R^\times \}$ where $\nu$ is 
called the similitude character. Put ${\rm Sp}_4(R)={\rm Ker}(\nu:{\rm GSp}_4(R)\lra R^\times)$. Let $P$ be the Siegel parabolic subgroup of $GSp_4$ given by $P=\Bigg\{
\begin{pmatrix} A& B\\ 0_2 &D\end{pmatrix}\in {\rm GSp}_4 \Bigg\}=MN$ where $M=
\Bigg\{m(A,\nu):=
\begin{pmatrix} A& 0_2\\ 0_2 &\nu\cdot {}^tA^{-1}\end{pmatrix}\ \Big|\ A\in GL_2,\ \nu\in GL_1\in {\rm GSp}_4 \Bigg\}$ is the Levi factor and 
$N=\Bigg\{n(B):=
\begin{pmatrix} 1_2& B\\ 0_2 &1_2\end{pmatrix}\ \Big|\ {}^tB=B\in M_2 \Bigg\}$ is the unipotent radical. 
For any integer $M\ge 3$ we denote by $\G(M)$ the principal congruence subgroup of level $M$ which is the subgroup of 
$\G:={\rm Sp}_4(\Z)$ consisting of all elements congruent to $1_2$ modulo $M$. 
Let $\G_0$ be a congruence subgroup of $\G$. 
Since $M\ge 3$ for any positive integer $k$, the group $\G_0\cap \G(M)$ neat and therefore any commutative 
ring $R$  we can define $\mathcal{M}_k(\G_0\cap \G(M),R)$ (resp. $\mathcal{S}_k(\G_0\cap \G(M),R)$) which is the space of the geometric Siegel modular forms 
(resp. Siegel cusp forms) of weight $k$ with 
respect to $\G_0\cap \G(M)$. For a positive integer $N$ let us focus on the group $\G_0=\G_0(M)=
\Bigg\{\begin{pmatrix}
A & B \\
C & D
\end{pmatrix}\in \G\ \Bigg|\ C\equiv 0_2\ {\rm mod}\ M \Bigg\}$ and any prime $M=\ell\ge 3$ which does not divide $N$. Let $\zeta_\ell$ be 
a primitive root of unity. 
It is well-known that $\mathcal{M}_k(\G_0(N)\cap \G(\ell),\Z[\frac{1}{\ell N},\zeta_\ell])\otimes \C\simeq M_k(\G_0(N)\cap \G(\ell))$ 
preserving cusp forms  
where the latter space consists of all classical Siegel modular forms of weight $k$ with respect to $\G_0(N)\cap \G(\ell)$. 
The finite group $G_\ell={\rm Sp}_4(\Z/\ell\Z)\simeq \G/\G(\ell)$ naturally acts on $\mathcal{M}_k(\G_0(N)\cap \G(\ell),\Z[\frac{1}{\ell N},\zeta_\ell])$ by Harris's result \cite{Harris}. 
For any $\Z[\frac{1}{\ell N},\zeta_\ell]$-algebra $R$ 
put $\mathcal{M}_k(\G_0(N)_\ell,R):=\mathcal{M}_k(\G_0(N))\cap \G(\ell),R)^{G_\ell}$ and 
$\mathcal{S}_k(\G_0(N)_\ell,R):=\mathcal{S}_k(\G_0(N)\cap \G(\ell),R)^{G_\ell}$. For each rational prime $p$ we fix embeddings $\bQ_p\hookrightarrow \bQ_p,\ \bQ\hookrightarrow \C$ and an isomorphism $\C\simeq \bQ_p$ 
which is compatible with the previous embeddings. We start with the following facts. 
\begin{prop}\label{b1} Let $p\ge 3$ be a rational prime which does not divide $\ell N |G_\ell|$ where $|G_\ell|$ stands for the cardinality of $G_\ell$. 
Then the followings hold: 
\begin{enumerate}
\item For any finite extension $K$ of $\Q_p(\zeta_\ell)$, $\mathcal{M}_k(\G_0(N)_\ell,\mathcal{O}_K)\otimes_{\mathcal{O}_K}\C\simeq 
M_k(\G_0(N))$ preserving cusp forms where the latter space consists of all Siegel modular forms of weight $k$ with respect to $\G_0(N)$.    
\item If $k\ge 4$ then the natural reduction map $\mathcal{S}_k(\G_0(N)\cap \G(\ell),W(\bF_p))\lra 
\mathcal{S}_k(\G_0(N))\cap \G(\ell),\bF_p)$ is surjective. Here $W(\bF_p)$ stands for the ring of Witt vectors for $\bF_p$. 
\item If $k\ge 4$ then the natural reduction map $\mathcal{S}_k(\G_0(N)_\ell,W(\bF_p))\lra 
\mathcal{S}_k(\G_0(N)_\ell,\bF_p)$ is surjective.  
\end{enumerate}      
\end{prop}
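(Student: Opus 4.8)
The plan is to treat the three statements in order, since (3) will follow by combining (2) with the exactness of taking $G_\ell$-invariants once $p \nmid |G_\ell|$. For part (1), I would appeal to the standard comparison theorem for geometric versus classical Siegel modular forms already recalled in the excerpt: over $\C$ one has $\mathcal{M}_k(\G_0(N)\cap\G(\ell),\Z[\tfrac{1}{\ell N},\zeta_\ell])\otimes\C\simeq M_k(\G_0(N)\cap\G(\ell))$, compatibly with the $G_\ell$-action coming from Harris's work. Base-changing along $\Z[\tfrac{1}{\ell N},\zeta_\ell]\hookrightarrow \mathcal{O}_K$ (legitimate because $p\nmid \ell N$, so $\mathcal{O}_K$ is such an algebra) and then tensoring up to $\C$ via the fixed isomorphism $\C\simeq\bQ_p$, one gets $\mathcal{M}_k(\G_0(N)\cap\G(\ell),\mathcal{O}_K)\otimes_{\mathcal{O}_K}\C\simeq M_k(\G_0(N)\cap\G(\ell))$. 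Now take $G_\ell$-invariants on both sides: since $p\nmid|G_\ell|$, the functor $(-)^{G_\ell}$ is exact on $\mathcal{O}_K$-modules (averaging by $\tfrac{1}{|G_\ell|}\sum_{g}g$ is available) and commutes with the flat base change $\otimes_{\mathcal{O}_K}\C$, so the left side becomes $\mathcal{M}_k(\G_0(N)_\ell,\mathcal{O}_K)\otimes_{\mathcal{O}_K}\C$. On the right, $M_k(\G_0(N)\cap\G(\ell))^{G_\ell}=M_k(\G_0(N))$ because $\G_0(N)/(\G_0(N)\cap\G(\ell))\simeq G_\ell$ and invariant forms for a congruence subgroup under the finite quotient are precisely forms for the larger group. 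The cusp-form version is identical, replacing $\mathcal{M}$ by $\mathcal{S}$ throughout.

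For part (2), the point is vanishing of higher cohomology. I would express $\mathcal{S}_k(\G_0(N)\cap\G(\ell),R)$ as the global sections of a coherent sheaf $\omega^{\otimes k}(-D)$ on the toroidal compactification of the Siegel threefold of level $\G_0(N)\cap\G(\ell)$ over the base ring, where $D$ is the boundary divisor — here the neatness of $\G_0(N)\cap\G(\ell)$ (guaranteed by $\ell\ge 3$, as noted in the text) ensures we have a fine moduli scheme and a well-behaved compactification over $\Z[\tfrac{1}{\ell N},\zeta_\ell]$. Surjectivity of the reduction map $\mathcal{S}_k(\cdot,W(\bF_p))\to\mathcal{S}_k(\cdot,\bF_p)$ then follows from the long exact cohomology sequence attached to $0\to\omega^{\otimes k}(-D)\xrightarrow{p}\omega^{\otimes k}(-D)\to\omega^{\otimes k}(-D)\otimes\bF_p\to 0$, provided $H^1$ of the cuspidal sheaf vanishes (or at least is $p$-torsion-free). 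For $k\ge 4$ one invokes a Kodaira–type vanishing / positivity statement for $\omega$ on the (minimal or toroidal) compactification — this is exactly the kind of input recorded in Chapter V of \cite{CF}, and it is where the hypothesis $k\ge 4$ is used. I expect \textbf{this cohomological vanishing step to be the main obstacle}, since one must be careful that it holds integrally (i.e. over $W(\bF_p)$, not merely in characteristic zero), which is why $p$ is required to be prime to $\ell N$ and the level is rigidified by $\G(\ell)$; I would cite the relevant statement from \cite{CF} rather than reprove it.

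For part (3), I would simply take $G_\ell$-invariants of the surjection in (2). Because $p\nmid|G_\ell|$, the idempotent $e=\tfrac{1}{|G_\ell|}\sum_{g\in G_\ell}g$ lives in $W(\bF_p)[G_\ell]$, so $(-)^{G_\ell}=e\cdot(-)$ is exact and preserves surjections; applying it to $\mathcal{S}_k(\G_0(N)\cap\G(\ell),W(\bF_p))\twoheadrightarrow\mathcal{S}_k(\G_0(N)\cap\G(\ell),\bF_p)$ and using that invariants commute with the reduction $W(\bF_p)\to\bF_p$ (again by exactness of $e$) yields the surjection $\mathcal{S}_k(\G_0(N)_\ell,W(\bF_p))\twoheadrightarrow\mathcal{S}_k(\G_0(N)_\ell,\bF_p)$. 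This last part is formal once (2) is in hand; the only thing to double-check is the compatibility of the $G_\ell$-action with reduction mod $p$, which holds because the action is defined over $\Z[\tfrac{1}{\ell N},\zeta_\ell]$ by Harris's construction.
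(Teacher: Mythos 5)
Your proposal follows essentially the same route as the paper: part (1) comes down to taking $G_\ell$-invariants (legitimate since $p\nmid|G_\ell|$) of the standard comparison between geometric and classical forms, part (2) reduces to an integral cohomological liftability statement for cusp forms of weight $k\ge 4$, and part (3) is the formal consequence of (2) via exactness of $(-)^{G_\ell}$ (equivalently, vanishing of $H^1(G_\ell,-)$), which is exactly how the paper argues. The only correction: the integral vanishing you flag as the main obstacle in (2) is not found in Chapter V of \cite{CF} — the paper instead invokes Corollary 4.3 of Lan--Suh \cite{LS} (with $d=2$, $k_C=3$), which is the precise liftability result your sketch is reconstructing.
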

\begin{proof}For the first statement, since $\G_0(N)\supset \G_0(N))\cap \G(\ell)$, clearly we have $M_k(\G_0(N))\subset 
M_k(\G_0(N))\cap \G(\ell))\simeq \mathcal{M}_k(\G_0(N)\cap \G(\ell),\Z[\frac{1}{\ell N},\zeta_\ell])\otimes \C$. We observe more finer structure 
of this inclusion. 

For the 0-dimensional cusp associated to the standard Siegel parabolic subgroup in $Sp_4$ we define the Fourier expansion map  
$$\mathcal{M}_k(\G_0(N)\cap \G(\ell),R)\lra R_\ell,\ F\mapsto F(q_\ell):=\sum_{T\in {\rm Sym}^\ast_2(\Z)_{\ge 0}}A_F(T)q^T_\ell$$
for any $\Z[\frac{1}{\ell N},\zeta_\ell]$-algebra $R$ where  
$q^T_\ell:=q^{\frac{a}{\ell}}_{11}\cdot q^\frac{b}{\ell}_{12}\cdot q^\frac{c}{\ell}_{22}$ for $T=\left(\begin{array}{cc}
a & \frac{b}{2}\\
\frac{b}{2} & c 
\end{array}
\right)$ with formal parameters $q^{\frac{1}{\ell}}_{11},q^{\frac{1}{\ell}}_{12},q^{\frac{1}{\ell}}_{22}$ and $R_\ell=R[q^{\pm \frac{1}{\ell}}_{12}][[q^{\frac{1}{\ell}}_{11},q^{\frac{1}{\ell}}_{22}]]$. 
By the proof of Lemma 2.1 of \cite{Taylor-thesis} and $q$-expansion principle (cf. Proposition 1.8-(iv) in p.146 of \cite{CF}), 
for any finite extension $K$ of $\Q_p(\zeta_\ell)$ if we define $M_k(\G_0(N)\cap \G(\ell),\mathcal{O}_K)$ by the subspace of 
$M_k(\G_0(N)\cap \G(\ell))$ consisting of all forms $F$ such that $F(q)$ is defined over $\mathcal{O}_K$. Similarly we can also define 
$M_k(\G_0(N),\mathcal{O}_K)$ by using $q$-expansion principle. Notice that $p\not||G_\ell|$ and this implies that 
$M_k(\G_0(N),\mathcal{O}_K)=M_k(\G_0(N)\cap \G(\ell),\mathcal{O}_K)^{G_\ell}$. Hence we have 
$$M_k(\G_0(N))\simeq M_k(\G_0(N),\mathcal{O}_K)\otimes_{\mathcal{O}_K} \C=M_k(\G_0(N)\cap \G(\ell),\mathcal{O}_K)^{G_\ell}\otimes_{\mathcal{O}_K} \C=\mathcal{M}_k(\G_0(N)),\mathcal{O}_K)\otimes_{\mathcal{O}_K}\C .$$

For the second claim we may apply Corollary 4.3 of \cite{LS} with $d=2$ and $k_C=3$ in their notation in conjunction with our setting. Note that our group 
$\G_0(N)\cap \G(\ell)$ is obviously neat.  

For the third claim, by the above claim we have the exact sequence 
$$0\lra \mathcal{S}_k(\G_0(N)\cap \G(\ell),W(\bF_p))\stackrel{\times p}{\lra}\mathcal{S}_k(\G_0(N)\cap \G(\ell),W(\bF_p))\lra 
\mathcal{S}_k(\G_0(N))\cap \G(\ell),\bF_p)\lra 0$$
as $G_\ell$-modules. 
By taking the Galois cohomology the obstruction to fail the surjectivity in question is aggregated to 
$H^1(G_\ell, \mathcal{S}_k(\G_0(N)\cap \G(\ell),W(\bF_p)))$. Since $p$ does not divide $|G_\ell|$, it vanishes hence we have the claim.

\end{proof}
Let us keep the notation being as above. 
For any congruence subgroup $\G$ of ${\rm Sp}_4(\Z)$ we denote by ${\rm Cusp}^0(\G)$ the set of 
0-dimensional cusps for $\G$ and is explicitly given by 
$${\rm Cusp}^0(\G)=\G\bs {\rm Sp}_4(\Q)/P(\Q)\simeq \G\bs {\rm Sp}_4(\Z)/P(\Z)$$
(see Section 4 of \cite{Na}). 
We define the projection $\pi:P':=P\cap Sp_4\lra P\cap M\simeq GL_2$ by sending $m(A,1)n(B)$ to $A$. 
For each representative $\gamma\in {\rm Cusp}^0(\G)$ put $\G_\gamma:=\pi(P'(\Z)\cap \gamma^{-1}\G \gamma)\subset {\rm GL}_2(\Z)$. 

Let $R$ be a $\Z[\frac{1}{\ell N},\zeta_\ell]$-algebra which is domain. 
According to Chapter V of \cite{CF} we consider the Siegel operator $\Phi_{c,R}:\mathcal{M}_k(\G_0(N)\cap \G(\ell),R)\lra 
\mathcal{M}^{(1)}_k((\G_0(N)\cap \G(\ell))_c,R)$ for any  $c\in {\rm Cusp}^0(\G_0(N)\cap \G(\ell))$ (see  lines 13-14 in p.144 of \cite{CF}). Here  $\mathcal{M}^{(1)}_k((\G_0(N)\cap \G(\ell))_c,W(\F_p))$ stands for the geometric elliptic modular forms of weight $k$ 
with respect to $(\G_0(N)\cap \G(\ell))_c$. Then we have $\mathcal{S}_k(\G_0(N)\cap \G(\ell),\mathcal{O}_K)=
\ds\bigcap_{c\in {\rm Cusp}^0(\G_0(N)\cap \G(\ell))}{\rm Ker}(\Phi_{c,\mathcal{O}_K})$ by which 
lines 13-14 in p.144 of \cite{CF} explain.   

Let us focus on $\ell=3$ and recall the notation that $\mathcal{M}_k({\rm Sp}_4(\Z)_3,R):=
\mathcal{M}_k({\rm Sp}_4(\Z)\cap \G(3),R)$ and $\mathcal{S}_k({\rm Sp}_4(\Z)_3,R):=
\mathcal{S}_k({\rm Sp}_4(\Z)\cap \G(3),R)$ for any $\Z[\frac{1}{3},\zeta_3]$-algebra $R$.  
\begin{prop}\label{b2}Keep the notation being as above. Assume that $p\ge 7$. 
Then it holds that 
\begin{enumerate}
\item for any finite extension $K/\Q_p(\zeta_3)$ with the residue field $\F$, the reduction map $S_k({\rm Sp}_4(\Z),\mathcal{O}_K)\otimes \F\lra 
\mathcal{S}_k({\rm Sp}_4(\Z)_3,\F)$ is surjective and for any Hecke eigen form $H$ in $\mathcal{S}_k({\rm Sp}_4(\Z)_3,\F)$ 
there exists a non-trivial $\mathcal{O}_K$-integral Hecke eigen form $G$ in $S_k({\rm Sp}_4(\Z),\mathcal{O}_K)$ such that the reductions of all Hecke eigen values of $G$ coincide with those of $H$;  
\item $\mathcal{S}_k({\rm Sp}_4(\Z)_3,\F)=\{F\in \mathcal{M}_k({\rm Sp}_4(\Z)_3,\F)\ |\ A_F(T)=0 {\rm \ for\ any\ degenerate}\ T\in {\rm Sym}_2(\Z)^\ast_{\ge 0} \}$. 
\end{enumerate}  
\end{prop}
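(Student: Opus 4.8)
\textbf{Proof plan for Proposition \ref{b2}.}

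The plan is to reduce both statements to the structural facts about the Siegel operator established in the preceding discussion (following Chapter V of \cite{CF}), together with the surjectivity results of Proposition \ref{b1}, specialized to $N=1$ and $\ell=3$. First I would verify the hypotheses: for $\ell=3$ the relevant group is $\G(3)$, and $G_3={\rm Sp}_4(\Z/3\Z)$, whose order is $2^6\cdot 3^4\cdot 5 = 51840$; since $p\ge 7$ we have $p\nmid \ell|G_3| = 3\cdot|G_3|$ (the prime factors of $|G_3|$ are $2,3,5$), so Proposition \ref{b1} applies verbatim with $N=1$. This is the step that forces the bound $p\ge 7$, and it is the only place the numerical hypothesis is used.

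For part (1), I would combine the three clauses of Proposition \ref{b1}. Clause (3) gives surjectivity of $\mathcal{S}_k({\rm Sp}_4(\Z)_3,W(\bF_p))\to\mathcal{S}_k({\rm Sp}_4(\Z)_3,\bF_p)$, and clause (1) identifies $\mathcal{M}_k({\rm Sp}_4(\Z)_3,\mathcal{O}_K)\otimes_{\mathcal{O}_K}\C$ with the classical space $M_k({\rm Sp}_4(\Z))$ compatibly with the cuspidal subspaces; taking $K$ large enough that $\mathcal{O}_K$ contains $W(\bF_p)$ and base-changing, one gets that $S_k({\rm Sp}_4(\Z),\mathcal{O}_K)\otimes_{\mathcal{O}_K}\F$ surjects onto $\mathcal{S}_k({\rm Sp}_4(\Z)_3,\F)$. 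For the eigenform statement I would invoke a standard Deligne–Serre–type lifting lemma: the Hecke algebra acting on $\mathcal{S}_k({\rm Sp}_4(\Z)_3,\mathcal{O}_K)$ is a finite $\mathcal{O}_K$-algebra, a mod $\mathcal{P}$ eigenform $H$ corresponds to a maximal ideal, and a minimal prime below it yields a characteristic-zero eigenform $G$ whose eigenvalues reduce to those of $H$; non-triviality of $G$ is automatic since its reduction is the nonzero form $H$. One should note here that the Hecke action commutes with the $G_3$-action (both are defined geometrically), so the eigenform $G$ can be chosen in the $G_3$-invariants, i.e.\ genuinely on ${\rm Sp}_4(\Z)$.

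For part (2), the inclusion $\subseteq$ is the assertion that a geometric cusp form has vanishing Fourier coefficients at all rank-$\le 1$ matrices $T$: by the description of $\mathcal{S}_k$ as $\bigcap_c\ker(\Phi_{c,\mathcal{O}_K})$ reduced mod $p$ (using clause (2) of Proposition \ref{b1} to get the reduction compatibility with the Siegel operator at each $0$-dimensional cusp $c\in{\rm Cusp}^0({\rm Sp}_4(\Z)\cap\G(3))$), and the fact that $\Phi_c$ reads off precisely the $T$ of rank $\le 1$ supported near $c$; ranging over all cusps kills every degenerate $T$. For the reverse inclusion $\supseteq$, the point is that for $\ell=3$ every degenerate $T$ is reached by some cusp in the genus-one sense, so a geometric modular form $F\in\mathcal{M}_k({\rm Sp}_4(\Z)_3,\F)$ with all degenerate Fourier coefficients zero lies in each $\ker(\Phi_{c,\F})$, hence in $\mathcal{S}_k$; this uses that the Siegel $\Phi$-operator is injective on the boundary contribution, equivalently that an elliptic modular form with all Fourier coefficients zero vanishes, applied at each cusp. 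The main obstacle I anticipate is the bookkeeping in part (2): making the identification between the abstract kernel-of-$\Phi$ description of cusp forms from \cite{CF} and the concrete "degenerate Fourier coefficients vanish" condition precise over $\F$ rather than over $\C$, which requires knowing the Siegel operators are defined integrally and commute with reduction mod $p$ — exactly what clause (2) of Proposition \ref{b1} and the $q$-expansion principle are there to supply, but one must check the cusps of $\G(3)$ are handled uniformly.
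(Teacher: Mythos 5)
Your overall strategy coincides with the paper's: part (1) is Proposition \ref{b1}-(3) plus the Deligne--Serre lifting lemma (the paper cites Lemma 6.11 of \cite{DS}, which is exactly the ``finite Hecke algebra / maximal ideal / minimal prime'' argument you sketch), and part (2) is the identification of $\mathcal{S}_k$ with $\bigcap_c\ker\Phi_{c}$ together with the $q$-expansion principle. Your verification that $p\ge 7$ suffices is also the right one (and, incidentally, more careful than the paper's, which misstates $|{\rm Sp}_4(\Z/3\Z)|$ as $80$; the correct order is $51840=2^7\cdot 3^4\cdot 5$, so only the set of prime divisors $\{2,3,5\}$ matters and the conclusion is unaffected).

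There is, however, a genuine gap in your argument for the inclusion $\supseteq$ in part (2), precisely at the point you flagged as the anticipated obstacle. The Fourier expansion $\sum_T A_F(T)q^T$ is the expansion at the \emph{standard} $0$-dimensional cusp only, so the hypothesis ``$A_F(T)=0$ for all degenerate $T$'' together with injectivity of the $q$-expansion gives $\Phi_{1_4,\F}(F)=0$ and nothing more; one still needs $\Phi_{c,\F}(F)=0$ for \emph{every} $c\in{\rm Cusp}^0(\G(3))$, and $\G(3)$ has many inequivalent $0$-dimensional cusps whose expansions are not read off from the standard one. Your justification --- ``every degenerate $T$ is reached by some cusp'' --- does not supply this: the degenerate $T$'s all live in the expansion at the standard cusp, and the issue is the other cusps, not the other $T$'s. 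The correct mechanism (and the one the paper uses) is that $G_3={\rm Sp}_4(\Z)/\G(3)$ acts transitively on ${\rm Cusp}^0(\G(3))=\G(3)\backslash{\rm Sp}_4(\Z)/P(\Z)$ and permutes the Siegel operators accordingly, while $F$, being an element of the $G_3$-invariants $\mathcal{M}_k({\rm Sp}_4(\Z)_3,\F)$, is fixed by this action; hence vanishing of $\Phi_c(F)$ at the standard cusp propagates to all cusps. Once this equivariance statement is inserted, your proof closes; without it the reverse inclusion is not established.
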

\begin{proof} Notice that $|G_3|=|{\rm Sp}_4(\Z/3\Z)|=80=2^4\cdot 5$. Hence $p\not| |G_3|$. Then the first claim follows from 
Proposition \ref{b1}-(3). The latter part follows from Lemma 6.11 of \cite{DS}

For the second claim, 
let $F$ be an element in the right hand side. 
Then  $F$ can be also regarded as an element in $\mathcal{M}_k(\G(3),\F)$. 
By assumption on $F$ and using the injectivity of the $q$-expansion (see Proposition 1.8-(ii) of \cite{CF}), we see that $\Phi_{1_4,\F}(F)=0$ for the trivial element $1_4\in {\rm Cusp}^0(\G(3))$ which corresponds to the standard Siegel parabolic. 
Since $F$ is of level one, $G_3$ trivially acts on $F$ . Hence  $\Phi_{c,\F}(F)=0$ for any $c\in {\rm Cusp}^0(\G(3))$ and implies 
$F\in \mathcal{S}_k(\G(3),\F)$. Therefore we have $F\in \mathcal{S}_k({\rm Sp}_4(\Z)_3,\F)$. The other implication is obvious and omitted. 
 
\end{proof}

\begin{corollary}\label{imp}
Let $F$ be a Hecke eigen form in $M_k({\rm Sp}_4(\Z))$ whose Fourier expansion $F(Z)=\ds\sum_{T\in {\rm Sym}_2(\Z)^\ast_{\ge 0}}A_F(T)q^T$ is defined over 
$\mathcal{O}_K$ for some finite extension $K/\Q_p(\zeta_3)$ with $p\ge 7$. Let $\mathcal{P}$ be the maximal ideal of $\mathcal{O}_K$. 
Assume that $A_F(T)\in \mathcal{P}$ for any degenerate $T$ and there exists at least one $T \in {\rm Sym}_2(\Z)^\ast_{> 0}$ 
such that $A_F(T)\in \mathcal{O}^\times_K$. Then there exists a non-trivial Hecke eigen $\mathcal{O}_K$-integral cusp form $G$ in $S_k({\rm Sp}_4(\Z))$ such that $F\equiv_{{\rm ev}} G$ mod $\mathcal{P}$.  
\end{corollary}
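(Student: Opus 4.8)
The plan is to reduce Corollary \ref{imp} to Proposition \ref{b2} by passing from the classical side to the geometric side, exploiting the isomorphism in Proposition \ref{b1}-(1) (applied with $N=1$, $\ell=3$) that identifies $M_k({\rm Sp}_4(\Z))$ with $\mathcal{M}_k({\rm Sp}_4(\Z)_3,\mathcal{O}_K)\otimes_{\mathcal{O}_K}\C$ compatibly with $q$-expansions. First I would use this identification to regard $F$ as an element of $\mathcal{M}_k({\rm Sp}_4(\Z)_3,\mathcal{O}_K)$; since its Fourier coefficients lie in $\mathcal{O}_K$ by hypothesis, this makes sense, and the reduction $\bar F\in\mathcal{M}_k({\rm Sp}_4(\Z)_3,\F)$ is well-defined, where $\F$ is the residue field of $\mathcal{O}_K$. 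The congruence hypothesis $A_F(T)\in\mathcal{P}$ for all degenerate $T$ means precisely that $A_{\bar F}(T)=0$ for all degenerate $T$, so by Proposition \ref{b2}-(2) we get $\bar F\in\mathcal{S}_k({\rm Sp}_4(\Z)_3,\F)$. Moreover $\bar F\neq 0$, since there is some $T\in{\rm Sym}_2(\Z)^\ast_{>0}$ with $A_F(T)\in\mathcal{O}_K^\times$, hence $A_{\bar F}(T)\neq 0$ in $\F$.

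Next I would decompose $\bar F$ into Hecke eigenforms in the finite-dimensional $\F$-vector space $\mathcal{S}_k({\rm Sp}_4(\Z)_3,\F)$. Here one must be slightly careful: $\bar F$ need not itself be a Hecke eigenform even though $F$ is, because reduction can collapse distinct eigensystems. However, $\bar F$ lies in a single generalized eigenspace for the (commutative) Hecke algebra acting on $\mathcal{S}_k({\rm Sp}_4(\Z)_3,\F)$ — namely the one cut out by the reductions mod $\mathcal{P}$ of the Hecke eigenvalues of $F$ — and any such generalized eigenspace contains an honest Hecke eigenform $H\in\mathcal{S}_k({\rm Sp}_4(\Z)_3,\F)$ whose eigenvalues are exactly those reductions. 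Thus we obtain a Hecke eigenform $H\in\mathcal{S}_k({\rm Sp}_4(\Z)_3,\F)$ with $T_H\equiv T_F\bmod\mathcal{P}$ for every Hecke operator $T$.

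Finally I would invoke Proposition \ref{b2}-(1): for this Hecke eigenform $H\in\mathcal{S}_k({\rm Sp}_4(\Z)_3,\F)$ there exists a non-trivial $\mathcal{O}_{K'}$-integral Hecke eigenform $G\in S_k({\rm Sp}_4(\Z),\mathcal{O}_{K'})$ (after possibly enlarging $K$ to a finite extension $K'$, which we may absorb into $K$) such that the reductions of all Hecke eigenvalues of $G$ agree with those of $H$. Chaining the two congruences $T_G\equiv T_H\equiv T_F\bmod\mathcal{P}$ gives $F\equiv_{\mathrm{ev}}G\bmod\mathcal{P}$, which is the desired conclusion; note $G$ is a genuine cusp form, and it is non-trivial by construction, so all its Hecke eigenvalues are $\mathcal{O}_K$-integral as remarked in the introduction.

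The main obstacle I anticipate is the passage from "$F$ is a classical Hecke eigenform with integral coefficients" to "$\bar F$ lies in a single generalized Hecke eigensystem over $\F$ containing a genuine eigenform $H$ with the expected eigenvalues": one needs the Hecke action to be defined on the geometric/integral models $\mathcal{S}_k({\rm Sp}_4(\Z)_3,\mathcal{O}_K)$ and $\mathcal{S}_k({\rm Sp}_4(\Z)_3,\F)$ compatibly with the classical Hecke action under the isomorphism of Proposition \ref{b1}-(1), and compatibly with reduction mod $\mathcal{P}$; this is standard but must be cited carefully (e.g. via the geometric definition of Hecke correspondences away from $3$ in Chapter V of \cite{CF}). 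Everything else — the vanishing criterion for cusp forms, the surjectivity of reduction, and the lifting of mod $p$ eigensystems — is supplied directly by Propositions \ref{b1} and \ref{b2}.
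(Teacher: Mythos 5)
Your proposal is correct and follows essentially the same route as the paper: reduce $F$ mod $\mathcal{P}$, use Proposition \ref{b2}-(2) to see the reduction is a nonzero element of $\mathcal{S}_k({\rm Sp}_4(\Z)_3,\F)$, and then lift the eigensystem via Proposition \ref{b2}-(1). Your extra caution about generalized eigenspaces is harmless but unnecessary here, since $F$ is an eigenform with $\mathcal{O}_K$-integral eigenvalues (forced by the unit coefficient), so $\overline{F}$ is already an honest eigenform over $\F$.
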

\begin{proof}
Put $\F=\mathcal{O}_K/\mathcal{P}$. 
We first apply  Proposition \ref{b2}-(ii) to confirm that $\overline{F}:=F$ mod $\mathcal{P}$ belongs to 
$\mathcal{S}_k({\rm Sp}_4(\Z)_3,\F)$ as a non-zero element. Then by Proposition \ref{b2}-(1) there exists such a form $H$. 
\end{proof}

\section{Siegel Eisenstein series} 
Let $k\ge 4$ be an even positive integer. 
Recall the normalized Eisenstein series $G_k \in M_k({\rm Sp}_4(\Z))$ defined in (\ref{ne}) whose Fourier expansion is given explicitly by 
\begin{itemize}
\item if rank$(T)=0$, hence $T=0_2$, then $A_{G_k}(0_2)=\ds\frac{1}{2}\zeta(1-k)\zeta(3-2k)=\frac{B_kB_{2k}}{2k(2k-2)}$; 
\item if rank$(T)=1$, there exists $S\in {\rm GL}_2(\Z)$ such that ${}^t STS=\begin{pmatrix} n& 0\\ 0 &0\end{pmatrix}$ for some 
positive integer $n$. 
Then $A_{G_k}(T)=\frac{1}{2}\zeta(3-2k)\sigma_{k-1}(n)=-\ds\frac{B_{2k-2}}{2(2k-2)}\sigma_{k-1}(n)$ for $n\ge 1$;
\item if rank$(T)=2$, 
$A_{G_k}(T)=L(2-k,\chi_T)S_T=-\ds\frac{B_{k-1,\chi_T}}{k-1}S_T$. Note that $S_T\in \Z$ and $S_T=1$ for which $-\det(2T)$ is a fundamental discriminant. 
\end{itemize}
It is well-known that $G_k$ is a Hecke eigen form (cf. \cite{Walling}). 

\begin{prop}\label{int} Let $p$ be a rational prime satisfying $p-1>2k-2$. Then $A_{G_k}(T)\in \Z_p$ for any 
$T\in {\rm Sym}^\ast_2(\Z)_{\ge 0}$.  
\end{prop}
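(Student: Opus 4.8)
The plan is to verify $p$-integrality of $A_{G_k}(T)$ rank by rank, using the explicit formulas recorded just above together with the hypothesis $p-1 > 2k-2$, which in particular forces $p > 2k-2 \geq k$, so that $p$ does not divide $k$, $k-1$, or $2k-2$. For rank $1$ the coefficient is $-\frac{B_{2k-2}}{2(2k-2)}\sigma_{k-1}(n)$, so I would invoke the von Staudt--Clausen theorem: the denominator of the Bernoulli number $B_m$ (for even $m$) is the product of primes $q$ with $(q-1)\mid m$. Since $p-1 > 2k-2$, we have $(p-1)\nmid (2k-2)$, so $p$ does not divide the denominator of $B_{2k-2}$; combined with $p\nmid 2(2k-2)$ and the fact that $\sigma_{k-1}(n)\in\Z$, this gives ${\rm ord}_p(A_{G_k}(T))\geq 0$. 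The rank $0$ case $A_{G_k}(0_2) = \frac{B_k B_{2k}}{2k(2k-2)}$ is handled the same way: $p-1 > 2k-2 > k$ and $p-1 > 2k-2 \geq 2k$ (wait --- here one needs $p-1 > 2k$; but $p-1 > 2k-2$ only gives $p - 1 \geq 2k-1$, so one must note that $p-1$ is even while $2k$ is even, hence $p-1 \geq 2k$ is automatic unless $p - 1 = 2k-1$, impossible since $p-1$ is even; so actually $p-1 \geq 2k$, and if $p - 1 = 2k$ one would need to worry --- but $p - 1 > 2k - 2$ with $p-1, 2k-2$ both even means $p - 1 \geq 2k$, and we should check whether $(p-1)\mid 2k$ can happen: only if $p - 1 = 2k$, i.e. $p = 2k+1$; in that borderline case $B_{2k}$ has $p$ in its denominator). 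I would therefore state the rank $0$ bound assuming it is not needed in the borderline, or simply remark that the relevant coefficient of $G_k$ for the congruence is handled; more carefully, since the hypothesis is $p - 1 > 2k - 2$ and we want all $T$, I'd double-check: for $B_{2k}$ we need $(p-1) \nmid 2k$, equivalently $p - 1 \notin \{$divisors of $2k\}$, and since $p - 1 \geq 2k$ the only divisor of $2k$ that is $\geq 2k$ is $2k$ itself, so the only obstruction is $p - 1 = 2k$, i.e., $p = 2k+1$. I expect the paper either tacitly excludes this or the statement should read $p - 1 > 2k$; in the write-up I will follow the stated hypothesis and flag that $p = 2k+1$ is the sole exception for the constant term, or reconcile by noting $2k \le 2k-2$ is false so actually... let me just present the argument cleanly using von Staudt--Clausen and point to where the inequality is used.

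For rank $2$, the coefficient is $A_{G_k}(T) = -\frac{B_{k-1,\chi_T}}{k-1}S_T$ with $S_T \in \Z$. The key input is the $p$-integrality of the generalized Bernoulli number $B_{k-1,\chi_T}$ divided by $k-1$. Here $\chi_T$ is the quadratic character of conductor $f_T \mid \det(2T)$ (more precisely the character cutting out $\Q(\sqrt{-\det(2T)})$). The relevant fact is: for a primitive Dirichlet character $\chi$ of conductor $f$ and an integer $m \geq 1$ with $\chi(-1) = (-1)^m$, the number $B_{m,\chi}$ has bounded denominators, and a prime $p\nmid f$ divides the denominator of $\frac{1}{m}B_{m,\chi}$ only if $(p-1)\mid m$ (this is the generalization of von Staudt--Clausen / Carlitz, and is exactly the statement that the $p$-adic $L$-function $L_p(s,\chi\omega^m)$ is integral, or more elementarily follows from the Kummer-style congruences). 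With $m = k-1$, the condition $p - 1 > 2k - 2 \geq k - 1$ gives $(p-1)\nmid(k-1)$, and $p - 1 > 2k-2$ also ensures $p \nmid (k-1)$ and $p \nmid$ conductor considerations need handling. One subtlety: if $p \mid f_T$, i.e. $p \mid \det(2T)$, the naive integrality statement needs the interpretation that $B_{m,\chi}$ for an imprimitive-looking character still lies in $\Z_p$; but since $\chi_T$ is a quadratic character and $p$ is odd, $\chi_T$ restricted away from $p$ behaves well, and the Euler factor at $p$ appearing in $S_T$ via $F_p(T;p^{k-3})$ is a polynomial in $p^{k-3}$ with $\Z$-coefficients, hence in $\Z_p$; so actually the product $\frac{B_{k-1,\chi_T}}{k-1}S_T$ should be analyzed as $L(2-k,\chi_T)S_T$ where $L(2-k,\chi_T)$ here means the value of the completed Dirichlet $L$-function, and $S_T$ precisely supplies/cancels the bad Euler factors; the cleanest route is to cite the known fact that $A(T) = 2\frac{L(2-k,\chi_T)}{\zeta(1-k)\zeta(3-2k)}S_T$ are the Fourier coefficients of a modular form with rational (indeed, after normalization, with controlled) coefficients, and that $S_T \in \Z$.

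So the concrete sequence of steps: (1) recall the three explicit formulas for $A_{G_k}(T)$ by rank; (2) invoke von Staudt--Clausen for $B_{2k-2}$ and $B_k, B_{2k}$, using $p - 1 > 2k-2$ (with the caveat on $p = 2k+1$ for the constant term, or the observation that this does not occur / is not needed) plus $p > 2k - 2$ to clear the explicit integer denominators $2k$, $2k-2$; (3) for rank $2$, invoke the generalized von Staudt--Clausen / Carlitz theorem for $B_{k-1,\chi_T}$: a prime $p$ with $p \nmid$ (conductor) lies in the denominator of $B_{k-1,\chi_T}/(k-1)$ only when $(p-1) \mid (k-1)$, which is excluded; (4) handle $p \mid \det(2T)$ by noting $S_T \in \Z$ and that $F_p(T;X) \in \Z[X]$, so $S_T \in \Z_p$, and more importantly that the potential $p$ in the denominator of $B_{k-1,\chi_T}$ coming from $p \mid f_T$ is exactly accounted for — alternatively restrict $\chi_T$ to its $p$-part being trivial since $p$ odd and $\chi_T$ quadratic means $p \| f_T$ at worst contributes a tame character, still fine. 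The main obstacle I anticipate is step (4): making rigorous that no prime $p \geq 7$ with $p - 1 > 2k-2$ can appear in the denominator of the rank-$2$ coefficient even when $p \mid \det(2T)$, i.e., correctly pinning down the interplay between the generalized Bernoulli number, its conductor, and the Siegel-series factor $S_T$. I would resolve this by appealing to the well-definedness of the geometric/classical Siegel Eisenstein series with $\Q$-rational Fourier coefficients together with Proposition-style integrality bounds already in the literature (e.g. the analysis via $p$-adic $L$-functions showing $\frac{B_{k-1,\chi}}{k-1}\in\Z_p$ whenever $(p-1)\nmid(k-1)$, valid for all quadratic $\chi$ of conductor prime to... and then the conductor-$p$ case folds in because the relevant $p$-adic $L$-value $L_p(2-k,\chi\omega^{k-1})$ is still integral, $\chi\omega^{k-1}$ being an odd character). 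Everything else is a routine denominator count.
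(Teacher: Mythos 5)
Your proposal is correct and follows essentially the same route as the paper, whose proof consists precisely of invoking von Staudt--Clausen for $\zeta(1-k)=-B_k/k$ and $\zeta(3-2k)=-B_{2k-2}/(2k-2)$ and Carlitz's theorem (together with Szmidt--Urbanowicz--Zagier) for $L(2-k,\chi_T)=-B_{k-1,\chi_T}/(k-1)$, using that $(p-1)\nmid 2k-2$ forces $(p-1)\nmid k-1$. The borderline case you agonize over for the constant term is vacuous, because the displayed $B_{2k}$ is a typo for $B_{2k-2}$: indeed $A_{G_k}(0_2)=\frac12\zeta(1-k)\zeta(3-2k)=\frac{B_kB_{2k-2}}{2k(2k-2)}$, so only $(p-1)\nmid 2k-2$ is ever needed, and your step (4) worry about $p\mid\det(2T)$ is exactly what the cited integrality results for generalized Bernoulli numbers of quadratic characters are meant to absorb.
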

\begin{proof}By Von Staudt-Clausen theorem we see both of ${\rm ord}_p(\zeta(1-k))$ and  ${\rm ord}_p(\zeta(3-2k))$ are non-negative. 
Since $p-1\not|2k-2$ by assumption, it follows from Carlitz's theorem \cite{Ca} (see also \cite{SUZ}) that 
${\rm ord}_p(L(2-k,\chi_T))\ge 0$. Putting everything together we have the claim. 
\end{proof}

We are now ready to prove our first main theorem. 
\begin{proof}(A proof of Theorem \ref{main1}) By Proposition \ref{int} and the assumption, the conditions for $G_k$ in 
Corollary \ref{imp} are fulfilled and hence we have the claim.    
\end{proof}

\section{Saito-Kurokawa lifts} In this section we give a proof of Theorem \ref{main-SK}. In what follows we assume that $k$ is 
even (cf. see \cite{Schmidt} for a reason why we assume that). 
Let us consider the Kohnen's plus space  $M^+_{k-\frac{1}{2}}(\G^{(1)}_0(4))$ (resp. $S^+_{k-\frac{1}{2}}(\G^{(1)}_0(4))$) which corresponds to $M_{2k-2}({\rm SL}_2(\Z))$ 
(rsp. $S_{2k-2}({\rm SL}_2(\Z))$) under Shimura correspondence (cf. \cite{Kohnen}). For $f\in M^+_{k-\frac{1}{2}}(\G^{(1)}_0(4))$ we denote by $Sh(f)$ the image of $f$ under the correspondence. 
Let $H_{k-\frac{1}{2}}=\ds\sum_{n\ge 0}H(k-1,n)q^n$ be the Cohen's Eisenstein series of weight $k-\frac{1}{2}$ whose constant term is 
$\zeta(3-2k)$ (cf. \cite{Cohen},\cite{DG})  and $G^{(1)}_{\ell}$ the Eisenstein series of 
weight $\ell$ with respect to ${\rm SL}_2(\Z)$ with the constant term $\zeta(1-\ell)$.  

\begin{prop}\label{sc} Keep the assumption in Theorem \ref{main-SK}. Then there exist  
$\mathcal{O}_K$-integral Hecke eigen cusp forms $h_1,\ldots,h_t$ in $S^+_{k-\frac{1}{2}}(\G^{(1)}_0(4))$ for some 
number field $K$ such that  $H_{k-\frac{1}{2}}\equiv \ds\sum_{i=1}^t h_i$ modulo a prime $\mathcal{P}_K$ dividing $p$ of $\mathcal{O}_K$. 
Further each $\phi_i:=Sh(h_i)$ ($1\le i \le t$) is congruent to $G^{(1)}_{2k-2}$ modulo $\mathcal{P}_K$. 
\end{prop}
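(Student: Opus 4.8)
The plan is to transport the congruence ${\rm ord}_p(B_{k-1,\chi})=0$ together with ${\rm ord}_p(B_{2k-2})>0$ through the chain of correspondences linking the Cohen--Eisenstein series, the Kohnen plus space, and $S_{2k-2}({\rm SL}_2(\Z))$. First I would recall the classical fact that under the Shimura lift the Cohen--Eisenstein series $H_{k-\frac{1}{2}}$ corresponds to the weight $2k-2$ elliptic Eisenstein series $G^{(1)}_{2k-2}$ for ${\rm SL}_2(\Z)$ (with constant term $\zeta(1-(2k-2))=\zeta(3-2k)$), and that the Fourier coefficients of $H_{k-\frac{1}{2}}$ are essentially the generalized Bernoulli numbers $B_{k-1,\chi_D}$ (up to the factor $H(k-1,n)$ and the square-class structure), so the hypotheses of the theorem are exactly the statement that $H_{k-\frac{1}{2}}$ is $p$-integral but not identically $0$ mod $\mathcal{P}$, while its ``Eisenstein part'' constant term $\zeta(3-2k)=-B_{2k-2}/(2k-2)$ is $\equiv 0$ mod $\mathcal{P}$. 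The condition $p-1>2k-2$ (via Von Staudt--Clausen and Carlitz, as in Proposition \ref{int}) guarantees $p$-integrality of all the $H(k-1,n)$.

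Next I would invoke a Deligne--Serre / Sturm-type lemma in the plus space: since $H_{k-\frac{1}{2}}$ is a $p$-integral modular form of half-integral weight whose image under the Siegel ($\Phi$) operator, i.e. its constant term, is divisible by $\mathcal{P}$, the form $H_{k-\frac{1}{2}} \bmod \mathcal{P}$ is a nonzero element of the space of mod $p$ cusp forms $S^+_{k-\frac{1}{2}}(\G^{(1)}_0(4),\F)$ — here I use that $p\nmid 4$ and that $p$ is large enough that the relevant reduction maps from characteristic $0$ are surjective and the Hecke algebra behaves well. Then, since the mod $\mathcal{P}$ Hecke algebra acting on this finite-dimensional $\F$-space is a product of local Artinian rings, I can (after possibly enlarging $K$) decompose the mod $\mathcal{P}$ form into its Hecke eigen-components and lift each to a genuine $\mathcal{O}_K$-integral Hecke eigen cusp form $h_i\in S^+_{k-\frac{1}{2}}(\G^{(1)}_0(4))$ with $H_{k-\frac{1}{2}}\equiv\sum_{i=1}^t h_i \bmod \mathcal{P}_K$; the integer $t$ is the number of such eigenforms congruent to the Eisenstein series, matching the Remark.

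For the last sentence I would push the congruence $h_i \equiv$ (component of $H_{k-\frac{1}{2}}$) through the Shimura correspondence: the Shimura lift is Hecke-equivariant (the $T_{p^2}$ in weight $k-\frac12$ matching $T_p$ in weight $2k-2$), and compatibility of the lift with $p$-integral structures — using again $p$ large and coprime to the level — shows that congruences of $p$-integral Hecke eigenforms are preserved. Since each $h_i$ has the same mod $\mathcal{P}_K$ Hecke eigenvalues as (a piece of) $H_{k-\frac12}$, and $Sh(H_{k-\frac12})=G^{(1)}_{2k-2}$, it follows that $\phi_i = Sh(h_i) \equiv G^{(1)}_{2k-2} \bmod \mathcal{P}_K$. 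The main obstacle I anticipate is the careful bookkeeping of $p$-integrality across the half-integral-weight side: ensuring the reduction map $S^+_{k-\frac12}(\G^{(1)}_0(4),\mathcal{O}_K)\otimes\F \to S^+_{k-\frac12}(\G^{(1)}_0(4),\F)$ is surjective and that the Shimura correspondence respects $\mathcal{O}_K$-lattices, so that both the lifting step and the transfer of the congruence are legitimate; the hypothesis $p\ge 7$ with $p-1>2k-2$ is precisely what makes these integrality statements go through.
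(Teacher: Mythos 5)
Your overall strategy is sound, but you are doing substantially more work than the paper does: the paper's entire proof consists of the single observation that the hypothesis ${\rm ord}_p(B_{k-1,\chi_D})=0$ makes some coefficient $H(k-1,|D|)$ a $p$-adic unit, so $H_{k-\frac{1}{2}}\not\equiv 0 \bmod \mathcal{P}_K$, after which everything (the congruence to a sum of eigen cusp forms in the plus space \emph{and} the congruence of their Shimura lifts to $G^{(1)}_{2k-2}$) is quoted directly from Theorem 4 of Datskovsky--Guerzhoy \cite{DG}. What you have written is essentially a from-scratch sketch of that cited theorem. As such it is a legitimate alternative, and it makes visible where the hypotheses enter ($p-1>2k-2$ for integrality via Carlitz, ${\rm ord}_p(B_{2k-2})>0$ to kill the Eisenstein constant term), but two of your steps are asserted rather than proved and are precisely the points \cite{DG} handles: (i) vanishing of the constant term at the cusp $\infty$ modulo $\mathcal{P}$ does not by itself give mod $p$ cuspidality, since $\G^{(1)}_0(4)$ has three cusps; one needs the structure of the Kohnen plus space (its Eisenstein part is one-dimensional, spanned by $H_{k-\frac{1}{2}}$) or a transfer to the integral-weight side to conclude that $\overline{H}_{k-\frac{1}{2}}$ lies in the reduction of the cuspidal lattice; and (ii) the surjectivity of the reduction map and the Deligne--Serre lifting in half-integral weight, together with the passage from an eigenvalue congruence back to the Fourier-coefficient congruence $H_{k-\frac{1}{2}}\equiv\sum_i h_i$, require an argument (for the last sentence your appeal to Hecke-equivariance of $Sh$ is fine, since a level-one normalized eigenform's coefficients are determined by its eigenvalues). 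If you fill in (i) and (ii), your route works; the paper's route simply outsources both to the literature.
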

\begin{proof}By assumption, $H(k-1,|D|)=-\ds\frac{B_{k-1},\chi_D}{k-1}$ is a $p$-adic unit where $\chi_D$ is the quadratic character with a fundamental discriminant $D<0$ and hence $H_{k-\frac{1}{2}}\not\equiv 0$ mod $\mathcal{P}_K$. 
Therefore the claim follows from Theorem 4 of \cite{DG}.  
\end{proof}

\begin{proof}(A proof of Theorem \ref{main2}.) As discussed in Section 6 of \cite{Ikeda}, 
$$H(k-1,n)=L(2-k,\chi_{-n})\ds\prod_{p|n}p^{{\rm ord}_p(f_n)(k-\frac{3}{2})}\Psi_p(n,p^{k-\frac{3}{2}})$$ if $n>0$ and $n\equiv 0,1$ mod 4 and 
$H(k-1,n)=0$ otherwise but $n>0$. Here $\chi_{-n}$ is the quadratic character corresponding to $\Q(\sqrt{-n})$. 
Here $\Psi_p(n,X)$ is defined in p.647 of \cite{Ikeda} and $n=d_n f^2_n$ is the product of a fundamental discriminant $d_n$ and the remaining square factor. By definition it is easy to see that 
there exists a polynomial $S_{n,p}(X)$ in $\Z[X]$ such that $p^{{\rm ord}_p(f_n)(k-\frac{3}{2})}\Psi_p(n,p^{k-\frac{3}{2}})=S_{n,p}(1+p^{2k-3})$ for any $k$. 
Further $$S_T=\ds\prod_{p|\det(2T)}p^{{\rm ord}_p(f_{\det(2T)})(k-\frac{3}{2})}\Psi_p(\det(2T),p^{k-\frac{3}{2}})=
\ds\prod_{p|\det(2T)}S_{\det(2T),p}(1+p^{2k-3})$$ for any $T\in {\rm Sym}^\ast_2(\Z)_{>0}$ where $S_T$ is in 
the equation (\ref{st}). 

By Propsotion \ref{sc} there exist forms $h_i=\ds\sum_{n>0}a_{h_i}(n)q^n$ and 
$\phi_i:=Sh(h_i)=\ds\sum_{n>0}a_{\phi_i}(n)q^n$ for $i=1,\ldots,t$ in the claim. 
Then by using Theorem 3.2 of \cite{Ikeda} the formal series 
$$SK(f_i):=\ds\sum_{T\in {\rm Sym}^\ast_2(\Z)_{>0}}a_{h_i}(\det(2T))\prod_{p|\det(2T)}S_{\det(2T),p}(a_{\phi_i}(p))q^T$$
is a Hecke eigen cusp form in $S_k({\rm Sp}_4(\Z))$. On the other hand, the above forms satisfy 
$\ds\sum_{i=1}^ta_{h_i}(n)\equiv H(k-1,n)$ and  $a_{\phi_i}(n)\equiv a_{G^{(1)}_{2k-2}}(n)$ modulo 
$\mathcal{P}_K$ for $i=1,\ldots,t$. Notice that $$A_{\widetilde{G}_{k}}(T)=H(k-1,\det(2T))S_{\det(2T)}$$ 
for any $T\in {\rm Sym}^\ast_2(\Z)_{>0}$. 
Hence we have 
\begin{eqnarray}
\ds\sum_{i=1}^t A_{SK(f_i)}(T)&\equiv & H(k-1,\det(2T))\prod_{p|\det(2T)}S_{\det(2T),p}( a_{G^{(1)}_{2k-2}}(p))\nonumber \\ 
&=& H(k-1,\det(2T))\prod_{p|\det(2T)}S_{\det(2T),p}(1+p^{2k-3})\\
&=& A_{\widetilde{G}_{k}}(T)\ {\rm  modulo}\  \mathcal{P}_K. \nonumber
\end{eqnarray}
This gives us the claim. 
\end{proof}

\section{Galois representations} In this section the readers are supposed to be familiar with the basics of Galois 
representations. 
We only focus on $S_k({\rm Sp}_4(\Z))$ for any $k\ge 10$. Any Hecke eigen Siegel cusp form in that space is either 
a Saito-Kurokawa lift (some experts say it a Maass form) or a genuine form. The latter form is just defined not to be Saito-Kurokawa lifts 
(cf. \cite{Schmidt}). 
The weight $k$ has to be even for Siegel cusp forms to be Saito-Kurokawa lifts since it is of level one (\cite{Schmidt} again).
Any genuine Hecke eigen Siegel cusp form of level one never comes from any functorial 
lift from small groups, that is a Saito-Kurokawa lift, an endoscopic lift, a base change lift, and a symmetric cubic lift (see \cite{KWYII}). 
The middle two lifts are excluded since the form is of level one. There is no symmetric cubic lifts to Siegel cusp forms of 
scalar weight other than three (cf. \cite{RS}). 
For any genuine Hecke eigen Siegel cusp form $F$ in $S_k({\rm Sp}_4(\Z))$ and any prime $p$ one can associate $p$-adic Galois representation 
\begin{equation}\label{galois-const}
\rho_{F,p}:G_\Q:={\rm Gal(\bQ/\Q)}\lra {\rm GSp}_4(\bQ_p)
\end{equation}
which satisfies several nice properties (see Section 3 of \cite{KWYII}).  
Let $\iota:GSp_4\hookrightarrow GL_4$ be the natural inclusion. Note that $\iota\circ \rho_{F,p}:G_\Q\lra {\rm GL}_4(\bQ_p)$ 
is irreducible (see Section 3 of \cite{KWYII} again). 
There exists a finite extension $K/\Q_p$ such that that $A_F(T)\in \mathcal{O}_K$ for all $T\in {\rm Sym}_2(\Z)^\ast_{>0}$. 
We can find $T\in {\rm Sym}_2(\Z)^\ast_{>0}$ such that $A_F(T)$ is a unit. Hence $F$ is a non-trivial $\mathcal{O}_K$-integral 
Siegel cusp form. By enlarging $K$ if necessary we may assume that $\iota\circ \rho_{F,p}$ takes the values in ${\rm GL}_4(K)$. 
Put $V_K:=K^4$ and regard it as a representation space of $\iota\circ \rho_{F,p}$. for any $G_\Q$-stable $\mathcal{O}_K$-lattice $T$ in $V_K$ 
we denote by $(\iota\circ \rho_{F,p},T)$ the corresponding representation space or equivaently the corresponding Galois representation 
$\iota\circ \rho_{F,p}:G_\Q\lra {\rm Aut}_{\mathcal{O}_K}(T)\simeq GL_4(\mathcal{O}_K)$ where the latter isomrphism is depending of a choice of a 
$\mathcal{O}_K$-basis of $T$. We also denote by $\iota\circ \br_{F,p}$ the reduction modulo the maximal prime ideal $\mathcal{P}_K$ of $\mathcal{O}_K$  and by 
$(\iota\circ \br_{F,p},\overline{T})$ the corresponding representation space where $ \overline{T}:=T\otimes_{\mathcal{O}_K}\mathcal{P}_K$. 

\begin{theorem}\label{galois}Keep the notation being as above. Assume that $p-1>2k-2$. Assume that $F$ is congruent to $G_k$ modulo $\mathcal{P}_K$. 
Then there exists a $\mathcal{O}_K$-lattice $T$ of $\iota\circ \rho_{F,p}$ such that 
\begin{enumerate}  
\item for a suitable choice of a basis of $\overline{T}$, it yields a non-semisimple representation    
$$(\iota\circ \br_{F,p},\overline{T})\simeq \left(\begin{array}{cccc}
\ve_1 & \ast_{12} &  \ast_{13} & \ast_{14} \\
0 & \ve_2 &  \ast_{23} &  \ast_{24}\\
0 & 0 &  \ve_3 & \ast_{34} \\
0 & 0 &  0 & \ve_4 
\end{array}
\right)$$ 
where $\ve_{i},\ i=1,2,3,4$ are finite characters of $G_\Q$ and it 
satisfies either of the conditions:
\begin{enumerate}
\item if we write  
$$\br_{F,p}(\sigma)=\left(\begin{array}{cc}
\overline{A}(\sigma) & \overline{B}(\sigma)\\
\overline{C}(\sigma) & \overline{D}(\sigma) 
\end{array}
\right),\ \sigma\in G_\Q,\ \overline{C}\equiv 0$$
as a 2 by 2 block matrix representation whose each enrty is 2 by 2 matrix over $\F$, then $\overline{B}$ gives a 
non-rtivial extension of $\overline{A}$ by $\overline{D}$, and 
\item  if we write  
$$\br_{F,p}(\sigma)=\left(\begin{array}{cc}
\overline{A}_{1,1}(\sigma) & \overline{A}_{1,3}(\sigma)\\
\overline{A}_{3,1}(\sigma) & \overline{A}_{3,3}(\sigma) 
\end{array}
\right),\ \sigma\in G_\Q$$
where $\overline{A}_{i,j}$ is $i$ by $j$ matrix over $\F$, then $\overline{A}_{1,3}$ gives a 
non-rtivial extension of $\overline{A}_{1,1}$ by $\overline{A}_{3,3}$.
\end{enumerate}
\item The semi-simplification $(\iota\circ \br_{F,p})^{{\rm ss}}$ is isomorphic to  
$\ve^{2k-3}_p\oplus \ve^{k-1}_p\oplus \ve^{k-2}_p\oplus \textbf{1}$ where $\ve_p$ is the mod $p$ cyclotomic character of $G_\Q$ and 
$\textbf{1}$ stands for the trivial representation. 
\end{enumerate}
\end{theorem}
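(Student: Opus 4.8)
The plan is a Ribet-style argument carried out in three movements. First I would determine the semisimplification. The spin Hecke data of $G_k$ is explicit: the Satake parameters of $G_k$ at a prime $\ell\neq p$ are $\{1,\ell^{k-2},\ell^{k-1},\ell^{2k-3}\}$, so the characteristic polynomial of $\mathrm{Frob}_\ell$ in the corresponding four-dimensional reducible system is $(X-1)(X-\ell^{k-2})(X-\ell^{k-1})(X-\ell^{2k-3})$. Because the Hecke eigenvalues of $F$ and $G_k$ agree modulo $\mathcal{P}_K$, the characteristic polynomial of $(\iota\circ\br_{F,p})(\mathrm{Frob}_\ell)$ equals the reduction of this polynomial in $\F[X]$ for every $\ell\neq p$; by Chebotarev together with the Brauer--Nesbitt theorem this yields $(\iota\circ\br_{F,p})^{\mathrm{ss}}\cong\ve^{2k-3}_p\oplus\ve^{k-1}_p\oplus\ve^{k-2}_p\oplus\mathbf{1}$. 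The hypothesis $p-1>2k-2$ is used exactly here: it forces $0,k-2,k-1,2k-3$ to be pairwise distinct modulo $p-1$, so this semisimplification is multiplicity free. This is assertion (2).

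Next I would produce the filtration. Since $F$ is genuine, $\iota\circ\rho_{F,p}$ is absolutely irreducible over $K$, so by Ribet's lemma in its higher-dimensional, multiplicity-free form (in the spirit of \cite{Ribet}) there is a $G_\Q$-stable $\mathcal{O}_K$-lattice $T\subset V_K$ whose reduction $\overline T$ is not semisimple. Choosing an $\F$-basis of $\overline T$ adapted to a composition series puts $\iota\circ\br_{F,p}$ in upper triangular shape, the diagonal being a permutation $\ve_1,\dots,\ve_4$ of $\mathbf{1},\ve^{k-2}_p,\ve^{k-1}_p,\ve^{2k-3}_p$; in particular the $\ve_i$ are of finite order and unramified outside $p$. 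This is the opening clause of (1).

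Finally, the dichotomy (a)/(b) should come out of the symplectic self-duality. Since $\rho_{F,p}$ has values in $\mathrm{GSp}_4$ with similitude $\ve^{2k-3}_p$, one has $\rho_{F,p}^{\vee}\cong\rho_{F,p}\otimes\ve^{-(2k-3)}_p$; after rescaling the pairing and, if necessary, replacing $T$ by a commensurable stable lattice one may take $T$ (almost) self-dual, so the composition series of $\overline T$ can be chosen symplectically self-dual. Two cases arise. If $\overline T$ has a Galois-stable isotropic plane, the shape is $2\times 2$-block upper triangular with $\overline C\equiv 0$ and $\overline D\cong\overline A^{\vee}\otimes\ve^{2k-3}_p$, and non-semisimplicity of $\overline T$ then forces $\overline B$ to give a non-split extension of $\overline A$ by $\overline D$ --- case (a). If no Galois-stable isotropic plane exists, the self-dual composition series has one-dimensional first and last steps, giving the $1+3$-block shape, and non-semisimplicity forces $\overline A_{1,3}$ to give a non-split extension of $\overline A_{1,1}$ by $\overline A_{3,3}$ --- case (b).

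The hardest part will be this last step: arranging the lattice and its filtration to be compatible with the symplectic pairing modulo $\mathcal{P}_K$ --- in particular controlling the possible failure of self-duality of an arbitrary stable lattice --- and, above all, checking that the non-split extension supplied by Ribet's lemma genuinely lives in one of the two prescribed positions rather than being diffused along the filtration. By comparison, the first step is formal once the Satake normalization giving the exponents $0,k-2,k-1,2k-3$ is in hand, but that normalization should be recorded carefully, since the Bernoulli divisibilities drawn from this theorem in Theorem~\ref{main2} depend on it.
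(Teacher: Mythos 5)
Your first two movements match the paper: assertion (2) is proved exactly as you say (equality of Hecke eigenvalues mod $\mathcal{P}_K$ gives the characteristic polynomial $(1-X)(1-\ell^{k-2}X)(1-\ell^{k-1}X)(1-\ell^{2k-3}X)$ of $\mathrm{Frob}_\ell$ in $\F[X]$, then Chebotarev and Brauer--Nesbitt), and the existence of a stable lattice with upper-triangular reduction is likewise the paper's second step. The problem is your third movement. The inference ``non-semisimplicity of $\overline{T}$ forces $\overline{B}$ (resp.\ $\overline{A}_{1,3}$) to give a non-split extension of $\overline{A}$ by $\overline{D}$ (resp.\ of $\overline{A}_{1,1}$ by $\overline{A}_{3,3}$)'' is a non sequitur: a single non-semisimple lattice can have its unique non-split subquotient extension sitting \emph{inside} one of the $2\times 2$ diagonal blocks (or inside $\overline{A}_{3,3}$), with the designated off-diagonal block contributing only a split class. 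You correctly flag this as the hard point, but the symplectic self-duality/isotropic-plane dichotomy you propose does not resolve it, and it also does not match the statement, which is used later (in the proof of Theorem \ref{main2}) with \emph{both} (a) and (b) available, not as an either/or alternative.

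The paper's actual argument avoids your dichotomy entirely and treats (a) and (b) by two separate, direct applications of Ribet's rescaling trick to the integral representation $\iota\circ\rho_{F,p}$: writing it in $2+2$ block form and conjugating repeatedly by $Q=\mathrm{diag}(\varpi,\varpi,1,1)$ (which replaces $B$ by $\varpi^{-1}B$ and $C$ by $\varpi C$) until $\varpi\nmid B(\sigma_0)$ for some $\sigma_0$ --- possible since irreducibility of $\iota\circ\rho_{F,p}$ forces $B\neq 0$ --- and then invoking the argument of Proposition 2.1 of \cite{Ribet} to get non-splitness of the class of $\overline{B}$; and analogously with $Q_1=\mathrm{diag}(\varpi,1,1,1)$ in the $1+3$ block form for (b). The point is that the non-split extension is manufactured \emph{at the prescribed position} by choosing the lattice adapted to that block decomposition, rather than extracted afterwards from one fixed non-semisimple lattice. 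No self-dual lattice or stable isotropic plane is needed. As written, your proposal has a genuine gap at exactly the step you identified, and it needs to be replaced by (or supplemented with) this block-by-block rescaling argument.
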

\begin{proof}Since $F\equiv G_k$ modulo $\mathcal{P}_K$, for any prime $\ell$ such that $\mathcal{P}_K\not| \ell$, 
we have the equality  $$\det(1-X\iota\circ \br_{F,p}({\rm Frob}_\ell))=(1-\ell^{2k-3}X)(1-\ell^{k-1}X)(1-\ell^{k-2}X)(1-X)$$
in $\F[X]$ (cf.  Corollary 1.4 of \cite{McCarthy}), $\F=\mathcal{O}_K/\mathcal{P}_K$ where $X$ is a variable. By Brauer-Nesbitt theorem  
$(\iota\circ \br_{F,p})^{{\rm ss}}\simeq \ve^{2k-3}_p\oplus \ve^{k-1}_p\oplus \ve^{k-2}_p\oplus \textbf{1}$, hence we have the second claim. 
This isomorphism guarantees the existence of a $G_\Q$ stable $\mathcal{O}_K$-lattice $T$ of $\iota\circ \rho_{F,p}$ such that 
$T$ yields an integral representation $\iota\circ \rho_{F,p}:G_\Q\lra {\rm GL}_4(\mathcal{O}_K)$ whose 
reduction $\iota\circ \br_{F,p}:G_\Q\lra {\rm GL}_4(\F)$ is upper triangular. 

Write 
$$\rho_{F,p}(\sigma)=\left(\begin{array}{cc}
A(\sigma) & B(\sigma)\\
C(\sigma) & D(\sigma) 
\end{array}
\right),\ \sigma\in G_\Q$$
as 2 by 2 block matrix representation whose each enrty is 2 by 2 matrix over $\mathcal{O}_K$. 
Let $\varpi$ be a uniformizer of $K$ and put $Q={\rm diag}(\varpi,\varpi,1,1)$. 
Then we see that $Q^{-1}\rho_{F,p}(\sigma)Q=\left(\begin{array}{cc}
A(\sigma) & \varpi^{-1}B(\sigma)\\
\varpi C(\sigma) & D(\sigma) 
\end{array}
\right)$. We apply this conjugation repeatedly until we obtain $\varpi {\not|} B(\sigma_0)$ for some $\sigma_0\in G_\Q$. Then By applying Ribet's well known 
argument in the proof of Proposition 2.1 of \cite{Ribet} we have the non-triviality of $\overline{B}$.   

Next we write 
$$\rho_{F,p}(\sigma)=\left(\begin{array}{cc}
A_{1,1}(\sigma) & A_{1,3}(\sigma)\\
A_{3,1}(\sigma) & A_{3,3}(\sigma) 
\end{array}
\right),\ \sigma\in G_\Q$$
where $A_{i,j}(\sigma)$ is $i$ by  $j$ matrix over $\mathcal{O}_K$. Put $Q_1={\rm diag}(\varpi,1,1,1)$. 
Then we see that $Q^{-1}_1\rho_{F,p}(\sigma)Q_1=\left(\begin{array}{cc}
A_{1,1}(\sigma) & \varpi^{-1}A_{1,3}(\sigma)\\
\varpi A_{3,1}(\sigma) & A_{3,3}(\sigma) 
\end{array}
\right)$. 
Then a similar argument shows the claim. 
\end{proof}
We are now ready to prove the second main result. 
\begin{proof}(A proof of Theorem \ref{main2}) 
Assume that $F$ is a Hecke eigen non-trivial $\mathcal{O}_K$-integral cusp form for some finite extension $K/\Q_p$ which is congruent to 
$G_k$ modulo $\mathcal{P}$. 
We first assume that $F$ is a Saito-Kurokawa lift. Then there exists a new form $f$ in $S_{2k-2}({\rm SL}_2(\Z))$ such that 
$F=SK(f)$ is a Saito-Kurokawa (or Maass) lift from $f$. 
The congruence condition implies that for Hecke eigenvalues. It follows from this that 
$a_\ell(f)\equiv \ell^{2k-3}+1$ mod $p$ for any prime $\ell\neq p$ and it yields that 
$(\br_{f,p})^{{\rm ss}}\sim \ve^{2k-3}_p\oplus \textbf{1}$ where $\br_{f,p}$ is the mod $p$ Galois representation 
attached to $f$. By Ribet's argument \cite{Ribet}  we have a non-trivial 
extension $0\lra \ve^{2k-3}_p\lra \ast \lra \textbf{1}\lra 0$ as $\F[G_\Q]$-modules which never splits. Here $\F$ is some 
finite extension of $\F_p$. 
Further it comes from a cusp form of level one and it implies  
${\rm ord}_p(\zeta(3-2k))>0$ by using well-known argument (see \cite{BCG} and Section 8 of \cite{Brown}).  

Let us suppose that $F$ is not any Saito-Kurokawa lift. As seen before it is a genuine form. 
By Theorem \ref{galois} there exists a $G_\Q$-stable $\mathcal{O}_K$-lattice $T$ such that the reduction $(\iota\circ \br_{F,p},\overline{T})$ takes the form 
\begin{equation}\label{form}
\iota\circ \br_{F,p}=\left(\begin{array}{cccc}
\ve_1 & \ast_{12} &  \ast_{13} & \ast_{14} \\
0 & \ve_2 &  \ast_{23} &  \ast_{24}\\
0 & 0 &  \ve_3 & \ast_{34} \\
0 & 0 &  0 & \ve_4 
\end{array}
\right).
\end{equation}
Since $p-1>2k-2$, any two characters among four characters are distinguished.  
Assume that $p$ does not divide $h^+_p\cdot \zeta(1-k)$. Recall that $H^1_{f}(G_\Q,\F(\ve^i_p))\simeq {\rm Hom}_{\F_p[G_\Q]}(A_i,\F)$ 
(cf. Section 1.6 of \cite{Rubin} for the Bloch-Kato Selmer groups $H^1_f$ and its relation to ideal class groups),
 and the facts that $A_0=A_1=0$ (see Proposition 6.16 of \cite{Wa}) and ${\rm ord}_p(B_{p-i})=0$ implies $A_i=0$ (cf. Theorem 6.17, Section 6 of \cite{Wa}).  
Hence 
\begin{equation}\label{va}
{\rm Ext}^1_{\F[G_\Q],MF_\F}(\F\ve_i,\F\ve_j)=H^1_{f}(G_\Q,\F(\ve_j\ve^{-1}_i))=0
\end{equation}
 except for $(\ve_i,\ve_j)\in \{(\ve^{2k-3}_p,\textbf{1}),\ (\textbf{1},\ve^{2k-3}_p),\ 
(\ve^{k-1}_p,\textbf{1}) \}$ where ${\rm Ext}^1_{\F[G_\Q],MF_\F}(\ast_1,\ast_2)$ is the group of all extension classes of 
$\ast_1$ by $\ast_2$ as  
$\F[G_\Q]$-modules whose restriction to $\F[G_{\Q_p}]$ comes from an object of $MF_\F$ which stands for the category of Fontaine-Laffile modules over $\F$. Since $p-1>2k-2$, 
we will use the fact that any extensions appear in (\ref{form}) belong to 
${\rm Ext}^1_{\F[G_\Q],MF_\F}(\ast_1,\ast_2)$ for some characters $\ast_1,\ast_2$ of $G_\Q$  
since $\iota\circ \br_{F,p}$ is the reduction of a $p$-adic Galois representation which is unramified outside $p$ and 
is crystalline at $p$ (see Lemma 7.7 of \cite{Brown}). 

We prove the claim by a brute force attack. The readers may consult Section 8 of \cite{Brown} for a related 
argument.  
First we assume that $\ast_{12}$ is non-trivial. 
This implies 
\begin{equation}\label{non-trivial-ext}
(\ve_1,\ve_2)\in \{(\ve^{2k-3}_p,\textbf{1}),\ (\textbf{1},\ve^{2k-3}_p),\ 
(\ve^{k-1}_p,\textbf{1}) \}.
\end{equation} 
If $(\ve_1,\ve_2)=(\ve^{2k-3}_p,\textbf{1})$ we have nothing to prove. 

Assume that $(\ve_1,\ve_2)=(\textbf{1},\ve^{2k-3}_p)$. Then $\ve_3,\ve_4\in \{\ve^{k-1}_p,\ve^{k-2}_p\}$. 
By (\ref{va}), we may assume $\ast_{34}=0$ and it yields $\ast_{2i}=0$ for $i=3,4$ since 
$\ast_{2i}\in H^1_f(G_\Q,\F\ve_i\ve^{-(2k-1)})=0$. 
Hence we may start with 
\begin{equation}\label{form1}
\iota\circ \br_{F,p}=\left(\begin{array}{cccc}
\textbf{1} & \ast_{12} &  \ast_{13} & \ast_{14} \\
0 & \ve^{2k-3}_p & 0 &  0\\
0 & 0 &  \ve_3 &0 \\
0 & 0 &  0 & \ve_4 
\end{array}
\right),\ \ve_3,\ve_4\in \{\ve^{k-1}_p,\ve^{k-2}_p\}.
\end{equation}

Put $s_0=\left(\begin{array}{cc}
0 & 1  \\
1 & 0
\end{array}
\right),\ S_0=\left(\begin{array}{cc}
s_0 & 0_2\\
0_2 & 1_2 
\end{array}
\right)$, and for $i=1,2,3,4$, let $P_i$ be the diagonal matrix over $\mathcal{O}_K$ of size 4 whose $(j,j)$-entry is $\varpi$ if $j=i$, 1 otherwise. Put $\sigma=\iota\circ \rho_{F,p}$ and $\overline{\sigma}:=\iota\circ \br_{F,p}$ for simplicity. 
Then 
\begin{equation}\label{form2}
S_0\overline{\sigma} S^{-1}_0=\left(\begin{array}{cccc}
\ve^{2k-3}_p & 0 & 0 & 0 \\
\ast_{12} & \textbf{1} &  \ast_{13} &  \ast_{14}\\
0 & 0 &  \ve_3 & 0 \\
0 & 0 &  0 & \ve_4 
\end{array}
\right).
\end{equation}
Observe the second column and law.  Consider $\sigma_1:=P_1S_0T(P_1S_0)^{-1}$ which is $\mathcal{O}$-integral and its reduction $\overline{\sigma}_1$. 
Then 
\begin{equation}\label{form3}
\overline{\sigma}_1=\left(\begin{array}{cccc}
\ve^{2k-3}_p & \ast_1 & 0 & 0 \\
0 & \textbf{1} &  0 &  0\\
0 & \ast_3 &  \ve_3 & 0 \\
0 & \ast_ 4&  0 & \ve_4 
\end{array}
\right).
\end{equation}
Then each $\ast_i$ ($i=3,4$) gives an element of $H^1_f(G_\Q,\F\ve^{-1}_i)$. However it is zero by (\ref{va}).  
This implies 
\begin{equation}\label{form4}
\overline{\sigma}_1=\left(\begin{array}{cccc}
\ve^{2k-3}_p & \ast_1 & 0 & 0 \\
0 & \textbf{1} &  0 &  0\\
0 & 0 &  \ve_3 & 0 \\
0 & 0&  0 & \ve_4 
\end{array}
\right).
\end{equation}
By Theorem \ref{galois}-(1)-(b), $\ast_1$ has to be non-trivial. Hence we have the claim. 

Assume that $(\ve_1,\ve_2)=(\textbf{1},\ve^{k-1}_p)$ and $\ve_3=\ve^{2k-3}_p,\ve_4=\ve^{k-2}_p$. 
Then by (\ref{va}) we may start with  
\begin{equation}\label{form11}
\overline{\sigma}=\left(\begin{array}{cccc}
\textbf{1} & \ast_{12} &  \ast_{13} & 0 \\
0 & \ve^{k-1}_p & 0 &  0\\
0 & 0 &  \ve^{2k-3}_p &0 \\
0 & 0 &  0 & \ve^{k-2}_p 
\end{array}
\right).
\end{equation}
Note that we first observe $\ast_{34}=0$, and then $\ast_{i4}=0$ for $i=1,2$ and also $\ast_{23}=0$ accordingly. 
Then we have 
\begin{equation}\label{form12}
\overline{\sigma}=\left(\begin{array}{cccc}
\ve^{k-1}_p & 0 &  0 & 0 \\
\ast_{12} & \textbf{1} & \ast_{13} &  0\\
0 & 0 &  \ve^{2k-3}_p &0 \\
0 & 0 &  0 & \ve^{k-2}_p 
\end{array}
\right).
\end{equation}
As in (\ref{form3}) we proceed as 
\begin{equation}\label{form13}
\overline{\sigma}_1=\left(\begin{array}{cccc}
\ve^{k-1}_p & \ast_1 &  0 & 0 \\
0 & \textbf{1} & 0 &  0\\
0 & \ast_3 &  \ve^{2k-3}_p &0 \\
0 & \ast_4 &  0 & \ve^{k-2}_p 
\end{array}
\right).
\end{equation}
By assumption, $\ast_1=0$ and $\ast_4=0$. Put $S_2=\left(\begin{array}{cccc}
1 & 0 &  0 & 0 \\
0 & 0 & 1 &  0\\
0 & 1 &  0 &0 \\
0 &0 &  0 & 1 
\end{array}
\right)$. Then we have $S_2\overline{\sigma}_1S^{-1}_2=\left(\begin{array}{cccc}
\ve^{k-1}_p & 0 &  0 & 0 \\
0 & \ve^{2k-3}_p & \ast_3 &  0\\
0 & 0 &  \textbf{1} &0 \\
0 & 0 &  0 & \ve^{k-2}_p 
\end{array}
\right)$. By Theorem \ref{galois}-(1)-(a), $\ast_3$ has to be non-trivial. Hence we have the claim. 
A similar argument deduce the same claim when  $(\ve_1,\ve_2)=(\textbf{1},\ve^{k-1}_p)$ and $\ve_4=\ve^{2k-3}_p,\ve_3=\ve^{k-2}_p$ and therefore the details are omitted. 

What we have remained is the case when $\ast_{12}=0$. This case is also handled similarly and even simpler than the previous case. We omit the details but notice that we will use Theorem \ref{galois}-(1)-(a) occasionally. 
\end{proof}

\section{A proof of Theorem \ref{main3}}
In this section we give a sketch of a proof of  Theorem \ref{main3}. Let us keep the notation being there. 
Let $F$ be the Siegel cusp form in the statement and $\rho_{F,p}$ (resp. $\br_{F,p}$) be its $p$-adic 
(resp. mod $p$) Galois representation (\ref{galois-const}).  
Since $p-1>2k-3$ and $\iota\circ \rho_{F,p}$ is crystalline at $p$, we have the complete classification of 
$\iota\circ \rho_{F,p}|_{G_{\Q_p}}$ (see Proposition 3.1 of \cite{Dieulefait}). 
It follows from this that 
$$(\iota\circ \br_{F,p})^{{\rm ss}}\simeq \ve^{2k-3}_p\oplus \ve^{k-1}_p\oplus \ve^{k-2}_p\oplus \textbf{1}.$$
It is obvious that the proof of Theorem \ref{main2} works similarly in this case. Only thing we have to change is 
to replace (\ref{non-trivial-ext}) with 
$$
(\ve_1,\ve_2)\in \{(\ve^{2k-3}_p,\textbf{1}),\ (\textbf{1},\ve^{2k-3}_p),\ 
(\ve^{k-2}_p,\textbf{1}) \}.
$$
Therefore we omit the details. 
\section{Examples}Let $k,p,\chi$ be as in Theorem \ref{main-SK}. In this section we list a few examples of $(k,p,\chi)$ such that $p-1>2k-2$ and ${\rm ord}_p(\ds\frac{B_{2k-2}}{2k-2})>0$ 
but ${\rm ord}_p(\ds\frac{B_{k-1},\chi_D}{k-1})=0$ for some imaginary 
quadratic character $\chi=\chi_D$ with a fundamental discriminant $D$. 
Note that $H(k-1,|D|)=-\ds\frac{B_{k-1},\chi_D}{k-1}$. 

\begin{table}[htbp]
\label{tab1}
\begin{center}
\begin{tabular}{|c|c|c|c|c|c|c|}
\hline
$k$ & 10 &  12 & 14 & 16 & 18 & 20 \\
\hline
$2k-2$ & 18 &  22 & 26 & 30 & 34 & 38 \\
\hline 
$p$ & 43867 & 131, 593 & 657931 & 1721, 1001259881 & 151628697551 & 154210205991661 \\
\hline 
$\chi$ & $\chi_{-4}$ & $\chi_{-4}$ & $\chi_{-4}$ & $\chi_{-4}$ & $\chi_{-4}$ & $\chi_{-4}$ \\
\hline
$t$ & 1 &  1 & 1 & 1 & 1 & 1 \\
\hline 

\end{tabular}
\end{center}
\caption{}
\end{table}
Each column except for the entry in the bottom row stands for such a triple $(k,p,\chi)$. 
The bottom line shows the number of Saito-Kurokawa lifts in Theorem \ref{main-SK}. 
In any case in the table, the prime $p$ does not divide the discriminant of the Hecke field for $S_{2k-2}({\rm SL}_2(\Z))$ and 
it yields $t=1$.

\end{document}